\newtheorem{theorem}{Theorem}
\newtheorem{lemma}[theorem]{Lemma}
\newtheorem{corollary}[theorem]{Corollary}
\newtheorem{proposition}[theorem]{Proposition}
\newtheorem{remark}[theorem]{Remark}
\newcommand{\tto}{\twoheadrightarrow}
\begin{document}

\title{Double Catalan monoids}
\author{Volodymyr Mazorchuk and Benjamin Steinberg}
\date{\today}

\begin{abstract}
In this paper we define and study what we call the double Catalan monoid.
This monoid is the image of a natural map from the $0$-Hecke monoid to
the monoid of binary relations. We show that the double Catalan monoid provides
an algebraization of the (combinatorial) set of $4321$-avoiding permutations
and relate its combinatorics to various off-shoots of both the combinatorics
of Catalan numbers and the combinatorics of permutations. In particular,
we give an algebraic interpretation of the first derivative of the
Kreweras involution on Dyck paths, of $4321$-avoiding involutions 
and of recent results of Barnabei {\em et al.} 
on admissible pairs of Dyck paths. We compute a presentation and determine 
the minimal dimension of an effective representation for the
double Catalan monoid. We also determine the minimal 
dimension of an effective representation for the $0$-Hecke monoid. 
\end{abstract}

\maketitle

\section{Introduction and description of the results}\label{s1}

The $0$-Hecke monoid $\mathcal{H}_n$ is the monoid obtained by specializing the defining
relations of the Hecke algebra at $q=0$ 
(and changing signs of the canonical generators to get rid of negatives). 
This is a classical object of study in both representation theory and combinatorics with
many applications, see for example \cite{No,Ca,RS,McN,Fa,HNT,NT2} and references therein.
An important quotient of the $0$-Hecke monoid, as observed in \cite{GM2}, is the 
monoid of all order preserving and non-decreasing transformations on $\{1,2,\dots,n\}$,
also known the {\em Catalan monoid} to emphasize that its cardinality is given by the
Catalan numbers. The presentation of $\mathcal{H}_n$ is very symmetric, but a large 
portion of this symmetry is lost by going to the Catalan monoid, as the latter corresponds
to choosing a ``one-way'' orientation on the Dynkin diagram. In other words, there
are two different quotients of $\mathcal{H}_n$, both isomorphic to the Catalan monoid,
corresponding to two different choices of the orientation.

The main object of study in the present paper is what we call the {\em double 
Catalan monoid}. It is defined as the image of $\mathcal{H}_n$ in the
semigroup $B_n$ of binary relations under the natural map sending a generator of  $\mathcal{H}_n$
to the semiring sum (inside of $B_n$) of the identity and the simple transposition of the
symmetric group corresponding to this generator. Equivalently, the double Catalan monoid can 
be described as the diagonal image of $\mathcal{H}_n$ in the direct sum of two ``opposite'' 
Catalan quotients, corresponding to the two different choices of orientation mentioned above.  

The generators of the double Catalan monoid, as described above, appear in various
guises in many rather different contexts. To start with, one can observe that 
the sum of the identity and a simple reflection is an element of the  Kazhdan-Lusztig basis
in the group algebra of $S_n$, see \cite{KL}. The binary relation representing this sum
appears in the theory of factor-powers of symmetric groups, see \cite{GM00}. In the theory of 
random walks, one often works with a  ``lazy'' version of a walk to remove periodicity phenomena.  
This amounts to standing still with probability $\frac{1}{2}$ and following the original random walk 
with probability $\frac{1}{2}$.  The operator $\frac{1}{2}(\mathrm{id}+s_{i})$ 
can be viewed then as a lazy random walk operator. Its ``booleanization'' is exactly a generator
of the double Catalan monoid.

The paper is structured as follows.
In Section~\ref{s2} we recall the definition of the $0$-Hecke monoid and discuss a combinatorial 
(or semigroup-theoretic) and a geometric realization of this monoid. The double Catalan monoid is
introduced in Section~\ref{s3}. It is defined as a submonoid of the monoid of binary relations.
We also give an alternative realization of the double Catalan monoid as a quotient of the 
$0$-Hecke monoid. To some extent the latter realization ``restores'' the symmetry lost under the
projection of the $0$-Hecke monoid onto the Catalan monoid. Section~\ref{s4} interprets several
combinatorial results for $4321$-avoiding permutations in terms of the algebraic structure of the
double Catalan monoid. This includes a bijection between $4321$-avoiding permutations and elements
of the double Catalan monoid, a bijection between $4321$-avoiding involutions and self-dual
elements of the double Catalan monoid, and an algebraic interpretation of the first derivative
of the Kreweras involution on Dyck paths and of the description of admissible pairs of Dyck paths in the
sense of \cite{BBS}. In Section~\ref{s5} we give a presentation of the double Catalan monoid
by exploiting a result of \cite{T} characterizing vexillary permutations. In Section~\ref{s6}
we propose generalizations of both the Catalan and double Catalan monoids to other Coxeter groups
and parabolic subgroups (with our original definition corresponding to the case of the symmetric
group $S_n$ and its maximal parabolic subgroup $S_{n-1}$).

In Section~\ref{s5} we prove that the minimal dimension (over any field) of an injective representation 
of the $0$-Hecke monoid associated to a finite Coxeter group $W$ equals sum of the indices of its maximal
parabolics minus the rank of $W$. From this we deduce that the minimal dimension (over any field) of 
an injective representation of the double Catalan monoid corresponding to $S_n$ is $2n-2$. Note that in
general the question of computing the minimal dimension of an injective representation of a given
monoid is very hard and there are few known techniques for doing this. The key ingredient in
our approach is a reinterpretation of the combinatorics of Bruhat quotients in terms of the algebraic
structure of left ideals of $0$-Hecke monoids (a similar interpretation appears in \cite{St}).
\vspace{5mm}

\noindent
{\bf Acknowledgements.} The paper was written during a visit of the
second author to Uppsala University. The financial support and 
hospitality of Uppsala University are gratefully acknowledged.
The first author was partially supported by the
Royal Swedish Academy of Sciences and the Swedish Research Council.
The second author was partially supported by NSERC. 
We thank the referee for valuable comments.

\section{$0$-Hecke monoids}\label{s2}

\subsection{Classical definition}\label{s2.1}

Let $\mathbb{N}$ denote the set of all positive integers. 
For $n\in\mathbb{N}$ set $\mathbf{N}:=\{1,2,\dots,n\}$, 
$\mathbf{N}':=\{1,2,\dots,n-1\}$ and let
$S_n$ be the symmetric group on $\mathbf{N}$. We will use the {\em one-line notation}
for elements of $S_n$ and write $a_1a_2\dots a_n$ for the permutation
\begin{displaymath}
\left(\begin{array}{cccc}1&2&\dots&n\\a_1&a_2&\dots&a_n\end{array}\right). 
\end{displaymath}
As usual, for $i\in\mathbf{N}'$
we denote by $s_i$ the simple transposition $(i,i+1)$ of $S_n$. We denote by
$\mathrm{id}$ the identity element of $S_n$. Then the $s_i$ are Coxeter generators of
$S_n$ and satisfy the following set of defining relations (for all appropriate
$i,j\in\mathbf{N}'$):
\begin{displaymath}
s_i^2=\mathrm{id};\quad\quad\quad s_is_j=s_js_i,\quad i\neq j\pm 1;\quad\quad\quad
s_is_{i+1}s_i=s_{i+1}s_is_{i+1}.
\end{displaymath}
The corresponding $0$-Hecke monoid $\mathcal{H}_n$ is then defined as generated by
elements $e_i$, $i\in\mathbf{N}'$, subject to the following relations (for all 
appropriate $i,j\in\mathbf{N}'$):
\begin{equation}\label{eq1}
e_i^2=e_i;\quad\quad\quad e_ie_j=e_je_i,\quad i\neq j\pm 1;\quad\quad\quad
e_ie_{i+1}e_i=e_{i+1}e_ie_{i+1}.
\end{equation}
Reversal of words induces an involution on $\mathcal H_{n}$ that we term {\em canonical}.

For $w\in S_n$ with reduced decomposition $w=s_{i_1}s_{i_2}\cdots s_{i_k}$ consider
the element $z_{w}:=e_{i_1}e_{i_2}\cdots e_{i_k}\in \mathcal{H}_n$. Then $z_w$ does 
not depend on the reduced decomposition of $w$ and the map $w\mapsto z_w$ is bijective
(see \cite[Theorem~1.13]{Ma}). For $w\in S_n$, the corresponding 
element $z_w$ is an idempotent if and only if $w$ is the longest element
of some parabolic subgroup of $S_n$, see \cite[Lemma~2.2]{No}. In particular,
$\mathcal{H}_n$ has exactly $2^{n-1}$ idempotents.

\subsection{Subset realization}\label{s2.2}

Let $\mathcal{P}(S_n)$ denote the power semigroup of $S_n$, that is, the set of all subsets 
of $S_n$.  It is a monoid with respect to the operation $A\cdot B=\{ab:a\in A, b\in B\}$.
The following statement was observed by several people (in particular, S.~Margolis and the 
second author mention this without proof in \cite{MS}):

\begin{proposition}\label{prop1}
The submonoid $T$ of $\mathcal{P}(S_n)$ generated by  
$\{\mathrm{id},s_i\}$, for $i\in\mathbf{N}'$, is isomorphic to $\mathcal{H}_n$.
\end{proposition}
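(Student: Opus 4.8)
The plan is to construct the isomorphism in the direction $\mathcal{H}_n\to T$ by exploiting the presentation \eqref{eq1}, and then to establish injectivity via the subword characterization of the Bruhat order. Write $E_i:=\{\mathrm{id},s_i\}\in\mathcal{P}(S_n)$. First I would verify that the subsets $E_i$ satisfy the defining relations \eqref{eq1}. The idempotency $E_i^2=E_i$ is immediate from $s_i^2=\mathrm{id}$. For $|i-j|\geq 2$ the commutation $E_iE_j=E_jE_i$ reduces, after expanding both products as four-element sets, to $s_is_j=s_js_i$. For the braid relation one expands $E_iE_{i+1}E_i$ and $E_{i+1}E_iE_{i+1}$ and checks that each equals the six-element set $\{\mathrm{id},s_i,s_{i+1},s_is_{i+1},s_{i+1}s_i,s_is_{i+1}s_i\}$ (the underlying parabolic copy of $S_3$), the two ``top'' elements matching because $s_is_{i+1}s_i=s_{i+1}s_is_{i+1}$. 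By the universal property of the presentation, the assignment $e_i\mapsto E_i$ then extends to a monoid homomorphism $\phi\colon\mathcal{H}_n\to\mathcal{P}(S_n)$ whose image is exactly $T$, so that $\phi\colon\mathcal{H}_n\tto T$ is surjective.

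It remains to prove that $\phi$ is injective. Here I would use the bijection $w\mapsto z_w$ from $S_n$ onto $\mathcal{H}_n$ recalled in Section~\ref{s2.1} and compute $\phi(z_w)$ explicitly. Fixing a reduced decomposition $w=s_{i_1}s_{i_2}\cdots s_{i_k}$, the product $E_{i_1}E_{i_2}\cdots E_{i_k}$ consists precisely of the elements $s_{i_{j_1}}s_{i_{j_2}}\cdots s_{i_{j_m}}$ with $1\leq j_1<j_2<\cdots<j_m\leq k$, obtained by deleting letters from the word $s_{i_1}\cdots s_{i_k}$; that is, of the images in $S_n$ of all subwords. By the Subword Property of the Bruhat order (a standard fact for Coxeter groups), this set is exactly the Bruhat interval $\{u\in S_n:u\leq w\}$. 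Hence $\phi(z_w)=[\mathrm{id},w]$ for every $w$.

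Finally, the map $w\mapsto[\mathrm{id},w]$ is injective, since $w$ is the unique maximal element of $[\mathrm{id},w]$ under the Bruhat order; thus $w$ is recoverable from $\phi(z_w)$, and $\phi$ is injective. Combined with surjectivity, this gives the isomorphism $\mathcal{H}_n\cong T$. The routine part is the relation-checking of the first step; the conceptual heart, and the step I expect to require the most care, is the identification of $\phi(z_w)$ with the Bruhat interval, since it rests both on the subword characterization and on the independence of $z_w$ from the chosen reduced word, which is what guarantees that $\phi(z_w)$ is well defined before the computation is carried out.
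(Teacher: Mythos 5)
Your proposal is correct and follows essentially the same route as the paper's proof: verify the relations \eqref{eq1} for the sets $\{\mathrm{id},s_i\}$ to get a surjection $\mathcal{H}_n\tto T$, then use the subword property of the Bruhat order to identify $\varphi(z_w)$ with the principal Bruhat ideal $\{u\in S_n: u\leq w\}$, from which injectivity follows. The only difference is that you spell out the details (the explicit six-element set in the braid check, the recovery of $w$ as the maximum of the interval) that the paper leaves implicit.
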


\begin{proof}
Set $A_i:=\{\mathrm{id},s_i\}$. It is straightforward to verify that the $A_i$
satisfy the following relations: 
\begin{displaymath}
A_i^2=A_i;\quad\quad A_iA_j=A_jA_i,\quad i\neq j\pm 1;\quad\quad
A_iA_{i+1}A_i=A_{i+1}A_iA_{i+1}.
\end{displaymath}
This means that the map $e_i\mapsto A_i$ extends uniquely to a surjective homomorphism
$\varphi\colon \mathcal{H}_n\tto T$. 

The subword property of the Bruhat order on $S_n$ implies that, for every $w\in S_n$,
the subset $\varphi(z_w)$ coincides with the principal ideal of $S_n$ (with respect 
to the Bruhat order) generated by $w$. This implies that $\varphi$ is injective
and hence an isomorphism.
\end{proof}

In other words, $\mathcal{H}_n$ can be viewed as the monoid of principal Bruhat
order ideals under the usual multiplication of subsets.  Recall that a monoid $M$ is said to 
be $\mathcal{J}$-trivial if $MaM=MbM$ implies $a=b$.

\begin{corollary}\label{cor2}
The monoid $\mathcal{H}_n$, realized as a submonoid of $\mathcal{P}(S_n)$ as above,
is an ordered monoid (by inclusion) in which $\{\mathrm{id}\}$ is the smallest
element. In particular, $\mathcal{H}_n$ is $\mathcal{J}$-trivial. 
\end{corollary}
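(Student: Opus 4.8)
The plan is to use the realization of $\mathcal{H}_n$ as a submonoid of $\mathcal{P}(S_n)$ established in Proposition~\ref{prop1}, under which each element $z_w$ corresponds to the principal Bruhat ideal $\{x\in S_n : x\leq w\}$. The two claims to verify are (i) that multiplication of subsets respects inclusion and that $\{\mathrm{id}\}$ is the least element, and (ii) that $\mathcal{J}$-triviality follows formally from (i). I would carry these out in that order.

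\emph{Ordering and the smallest element.} First I would note that $\{\mathrm{id}\}=\varphi(z_{\mathrm{id}})$ is the monoid identity, and that for every generator $A_i=\{\mathrm{id},s_i\}$ we have $\{\mathrm{id}\}\subseteq A_i$. For a general element $\varphi(z_w)$, which is the principal Bruhat ideal generated by $w$, it always contains $\mathrm{id}$ since $\mathrm{id}\leq w$ in the Bruhat order; hence $\{\mathrm{id}\}\subseteq\varphi(z_w)$ for all $w$, so $\{\mathrm{id}\}$ is the smallest element under inclusion. To see that the partial order by inclusion is compatible with multiplication, I would observe directly from the definition $A\cdot B=\{ab:a\in A,\,b\in B\}$ that $A\subseteq A'$ and $B\subseteq B'$ imply $A\cdot B\subseteq A'\cdot B'$, so the monoid is ordered in the required sense.

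\emph{Deducing $\mathcal{J}$-triviality.} Suppose $\mathcal{H}_n a\mathcal{H}_n=\mathcal{H}_n b\mathcal{H}_n$ for elements realized as subsets $a,b\subseteq S_n$. Since $\{\mathrm{id}\}$ is the identity and the smallest element, for any $u,v\in\mathcal{H}_n$ we have $a=\{\mathrm{id}\}\cdot a\cdot\{\mathrm{id}\}\subseteq u\cdot a\cdot v$, so every element of the two-sided principal ideal $\mathcal{H}_n a\mathcal{H}_n$ contains $a$ as a subset; in particular $a$ is the smallest element of $\mathcal{H}_n a\mathcal{H}_n$ with respect to inclusion. The equality of the two-sided ideals then forces $a$ and $b$ to both be the unique smallest element of the same set, whence $a=b$. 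This is the standard fact that a finite ordered monoid with smallest element equal to the identity is $\mathcal{J}$-trivial, and I would phrase it exactly this way.

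I do not expect any genuine obstacle here, since the substantive work—identifying $\varphi(z_w)$ with the principal Bruhat ideal—has already been done in the proof of Proposition~\ref{prop1}. The only point requiring a moment of care is the compatibility of inclusion with the subset product, but this is immediate from the definition of the product, and the implication ``smallest element $=$ identity $\Rightarrow$ $\mathcal{J}$-trivial'' is then a short formal argument. If anything, the mild subtlety is to record that each principal two-sided ideal has its generator as its minimum, rather than merely as a member; I would state this explicitly to make the deduction of $a=b$ airtight.
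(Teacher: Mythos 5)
Your proposal is correct and follows essentially the same route as the paper: the paper declares the ordered-monoid claim obvious and cites Straubing--Th\'erien \cite{ST} for the implication that an ordered monoid whose identity is the minimum is $\mathcal{J}$-trivial, which is exactly the short formal argument you write out (that $a=\{\mathrm{id}\}\cdot a\cdot\{\mathrm{id}\}\subseteq uav$ makes $a$ the unique minimum of $\mathcal{H}_n a\mathcal{H}_n$). The only difference is that you inline this standard argument instead of citing it --- and note that, as your argument shows, finiteness is not actually needed for this direction.
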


\begin{proof}
The first claim is obvious. The ``in particular'' statement follows
from \cite{ST}. 
\end{proof}

The order on $\mathcal{H}_n$ described in Corollary~\ref{cor2} corresponds to the
Bruhat order under the identification given by Proposition~\ref{prop1}
(that is $z_u\leq z_w$ if and only if $u\leq w$ in the Bruhat order).
A direct argument for $\mathcal{J}$-triviality of $\mathcal{H}_n$ can be found in
\cite{GM2}.

\subsection{Realization via foldings of the Coxeter complex}\label{s2.3}

The symmetric group $S_n$ gives rise to an example of a Coxeter group.  Every Coxeter group $W$ acts 
on a special simplicial complex $\Sigma(W)$, called its {\em Coxeter complex}.  The Coxeter generators act by reflecting over the walls of the complex.  The face poset of $\Sigma(W)$ is the set of all cosets of (standard) parabolic subgroups of $W$ ordered by reverse inclusion. For details concerning the theory of Coxeter complexes described below, see~\cite{AB}.

In the case of the symmetric group $S_n$, there is a simple combinatorial description of this complex.  The symmetric group acts on the $n$-simplex as its symmetry group by permuting the vertices and extending uniquely to an affine map.  The Coxeter complex $\Sigma(S_n)$ of $S_n$ is the barycentric subdivision of the boundary of the $n$-simplex.  Formally, the vertex set of $\Sigma(S_n)$ consists of all non-empty proper subsets of $\mathbf N$ (and so there are $2^{n}-2$ vertices). One should think of a subset as corresponding to the barycenter of the corresponding face of the $n$-simplex.  A simplex is then a set $\{F_1,\ldots,F_k\}$ of subsets forming a flag $F_1\subsetneq F_2\subsetneq \cdots \subsetneq F_k$.   Maximal faces are called chambers in this context.  The {\em fundamental chamber} $C$ is the flag 
\begin{displaymath}
\{1\} \subsetneq \{1,2\} \subsetneq\cdots\subsetneq  \{1,\ldots,n-1\}.
\end{displaymath}

Notice that the chambers are in bijection with permutations by listing the sequence of elements adjoined at each step followed by the missing element.  For example $\{3\}\subsetneq \{34\}\subsetneq\{341\}$ corresponds to the permutation $3412$.
In general, one can identify the face $F_1\subsetneq F_2\subsetneq\cdots \subsetneq F_k$ with the ordered set partition $(F_1,F_2\setminus F_1,\ldots, F_k\setminus F_{k-1},\mathbf N\setminus F_k)$ and so we can view the faces as ordered set partitions with at least two blocks.  Going down in the order corresponds to joining together two consecutive blocks.  The faces of the fundamental chamber are in bijection with subsets $J$ of $\mathbf N'$. If $J$ consists of $i_1<i_2<\cdots<i_k$, then the corresponding ordered set partition is 
\begin{equation}\label{standardface}
 (\{1,\ldots,i_1\},\{i_1+1,\ldots,i_2\},\ldots,\{i_k+1,\ldots,n\}).
\end{equation}
This face and all the elements of its orbit under $S_n$ are said to have {\em type} $J$.  Notice that the stabilizer of \eqref{standardface} is the parabolic subgroup generated by $s_i$, $i\in\mathbf N'\setminus J$.   For instance, the panel (codimension one) face of $C$ of type $\mathbf N'\setminus \{i\}$ is $(1,\ldots,i-1,\{i,i+1\},i+2,\ldots,n)$.

A simplicial endomorphism of $\Sigma(S_n)$ is said to be {\em type-preserving} if it preserves the type of each face.    In particular, such an endomorphism must preserve the dimension of each simplex as the dimension of a face of type $J$ is $|J|$.  Type-preserving endomorphisms are determined by their actions on chambers.  To make this more precise, consider the unoriented labeled Cayley graph $\Gamma$ of $S_{n}$ with respect to the Coxeter generators.  So the vertex set of $\Gamma$ is $S_{n}$ and there is an edge between $v$ and $w$ labeled by $s_j$ if $v^{-1}w=s_{j}$.  If we identify the chambers with vertices of $\Gamma$ and the panels with labeled edges by associating a face of type $\mathbf N'\setminus \{j\}$ with the edge labeled $s_j$, then one has that the monoid of type-preserving endomorphisms of $\Sigma(S_n)$ is isomorphic to the monoid of label-preserving endomorphisms of $\Gamma$.

The {\em wall} $H_i$ of $\Sigma(S_n)$ associated to the simple reflection $s_i$ is the subcomplex fixed by $s_i$.  It consists of all ordered set partitions in which $\{i,i+1\}$ are in the same block.  A {\em folding} of $\Sigma(S_n)$ is an idempotent type-preserving endomorphism $\varphi$ such that $|\varphi^{-1}(\varphi(C'))|=2$ 
(here $\varphi^{-1}$ stands for the inverse image of a mapping) for each chamber $C'$.

To every wall $H_i$ is associated a unique folding $\varphi_i$ such that $\varphi_i(C)=s_iC$.  Intuitively, it fixes the half-space (or root) containing $s_iC$ and reflects the half-space containing $C$.  Its action on a chamber $wC$ is specified by the formula
\begin{displaymath}
 \varphi_i(wC)=\begin{cases}
            s_iwC, & \text{if}\ \mathfrak{l}(w)< \mathfrak{l}(s_iw);\\
wC & \text{if}\ \mathfrak{l}(w)> \mathfrak{l}(s_iw).
           \end{cases}
\end{displaymath}

It now follows immediately that $\mathcal H_n$ is isomorphic to the monoid generated by the foldings $\varphi_i$, with $i\in \mathbf N'$, with the action on chambers being isomorphic to the regular action of $\mathcal H_n$ on itself.  In fact, the labeled Cayley graph $\Gamma$ of $S_{n}$ can be obtained from the Cayley digraph of $\mathcal H_{n}$ by removing the loop edges and forgetting the orientation.  Thus the action of $\mathcal H_{n}$ on $\Gamma$ is essentially its natural action on its labeled Cayley digraph.

The action of the generators of $\mathcal H_n$ on the Coxeter complex $\Sigma(S_{n})$ is easy to describe.  If $F=(F_1,\ldots,F_k)$ is an ordered set partition, then $\varphi_i$ fixes $F$ unless $i$ and $i+1$ are in different blocks of $F$ and the block of $i$ comes before the block of $i+1$.  In this case, one transposes $i$ and $i+1$.  For example, $\varphi_1(\{1,3\},\{2,4\})=(\{2,3\},\{1,4\})$.

The observation that the $0$-Hecke monoid acts on the Coxeter complex can essentially be found in~\cite{HST} where it is phrased in the language of root systems and Tits cones (note that in this realization the action of the
$0$-Hecke monoid in not linear).

\section{Double Catalan monoids}\label{s3}

\subsection{Binary relations and Boolean matrices}\label{s3.1}

Denote by $B_n$ the semigroup of binary relations on $\mathbf{N}$.
This is an ordered monoid (with respect to inclusion).
The semigroup $B_n$ can be identified with the semigroup 
$M_n(\{0,1\})$ of all $n\times n$-matrices over the Boolean semiring $\{0,1\}$
in the following way: to $\xi\in B_n$ there correspond a matrix $(\xi_{i,j})$,
where $\xi_{i,j}=1$ if and only if $(i,j)\in\xi$.
This identification equips $B_n$ with the structure of a semiring. In what follows
we will freely use this identification and refer to elements of $B_n$ both as relations
and as boolean matrices, depending on which is more convenient. Denote by $\Phi\colon S_n\to B_n$ 
the usual embedding and by $(\cdot)^t\colon M_n(\{0,1\})\to M_n(\{0,1\})$ the transpose map.

For $\xi\in B_n$ and $j\in\mathbf{N}$ we set $\xi(j)=\{i:(i,j)\in\xi\}\subseteq \mathbf{N}$.
A binary relation $\xi$ is called {\em convex} provided that it 
is reflexive and  for every $i\in\mathbf{N}$
the sets $\xi(j)$ and $\xi^t(j)$ are intervals of $\mathbf{N}$
(we will call this condition the {\em interval condition}). 
Let $\mathcal{C}B_n$ denote the set of all
convex binary relations. The following statement is easy to check:

\begin{proposition}\label{prop3}
The set $\mathcal{C}B_n$ is a submonoid of $B_n$.
\end{proposition}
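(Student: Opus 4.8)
The plan is to verify the two submonoid axioms: that the identity relation $\mathrm{id}$ lies in $\mathcal{C}B_n$, and that $\mathcal{C}B_n$ is closed under composition of relations. The first is immediate, since $\mathrm{id}$ is reflexive and each of its row and column sets is a singleton, hence an interval. For closure, fix convex relations $\xi,\eta$ and recall that in $B_n$ one has $(i,j)\in\xi\eta$ if and only if there is some $k$ with $(i,k)\in\xi$ and $(k,j)\in\eta$. Reflexivity of $\xi\eta$ then follows at once from reflexivity of both factors, since $(i,i)\in\xi$ and $(i,i)\in\eta$ give $(i,i)\in\xi\eta$.

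The substance of the proof is the interval condition for $\xi\eta$. First I would record the identity
\begin{displaymath}
(\xi\eta)(j)=\bigcup_{k\in\eta(j)}\xi(k),
\end{displaymath}
which merely unpacks the definition of composition. By convexity of $\eta$ the index set $\eta(j)$ is an interval $[a,b]$, by convexity of $\xi$ each $\xi(k)$ is an interval, and reflexivity of $\xi$ forces $k\in\xi(k)$. I would therefore isolate the following elementary lemma: if $I_a,\dots,I_b$ are intervals of $\mathbf{N}$ with $k\in I_k$ for each $k\in[a,b]$, then $\bigcup_{k=a}^{b}I_k$ is again an interval. The lemma is proved by ruling out a gap: if $x-1$ and some $y>x$ lie in the union but $x$ does not, then the interval witnessing $x-1$ must be indexed by some $k_1<x$ and the one witnessing $y$ by some $k_2>x$, with $k_1,k_2\in[a,b]$; but then $x\in[a,b]$, so $x\in I_x$, a contradiction. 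Applying the lemma shows that every column set of $\xi\eta$ is an interval.

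It remains to treat the row sets $(\xi\eta)^t(j)$. Here I would use that convexity is invariant under transposition, since transposition interchanges the row and column conditions and preserves reflexivity; hence $\xi^t$ and $\eta^t$ are again convex. As $(\xi\eta)^t=\eta^t\xi^t$ is a product of convex relations, the column argument already carried out applies verbatim to $\eta^t\xi^t$ and shows that its column sets, which are exactly the row sets of $\xi\eta$, are intervals. I expect the main obstacle to be the column lemma above: it is precisely the step where reflexivity is genuinely used, since without the hypothesis $k\in I_k$ a union of intervals may easily fail to be connected.
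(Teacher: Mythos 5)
Your proof is correct, and since the paper itself offers no argument (it simply declares the statement ``easy to check''), your write-up supplies exactly the routine verification the authors intended: reflexivity of a product is immediate, and the interval condition for columns follows from the identity $(\xi\eta)(j)=\bigcup_{k\in\eta(j)}\xi(k)$ together with your gluing lemma, with rows handled via $(\xi\eta)^t=\eta^t\xi^t$ and transpose-invariance of convexity. You also correctly pinpoint the one place where reflexivity is essential, namely that $k\in\xi(k)$ forces the union of the column intervals over the interval $\eta(j)$ to be connected, which is precisely why the hypothesis ``reflexive'' appears in the definition of convexity.
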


Clearly, the submonoid $\mathcal{C}B_n$  is stable under transpose.

\subsection{Definition of the double Catalan monoids}\label{s3.2}

The idea of the following definition comes from an attempt of ``booleanization''
of Pro\-po\-si\-ti\-on~\ref{prop1}. For $i\in\mathbf{N}'$ consider the element 
$\varepsilon_i:=\Phi(\mathrm{id})+\Phi(s_i)\in B_n$
(see the example in Figure~\ref{fig1}). Denote by $\mathcal{DC}_n$
the submonoid of $B_n$ generated by $\varepsilon_i$, $i\in\mathbf{N}'$.
We will call $\mathcal{DC}_n$ the {\em double Catalan monoid} (our motivation for
this name should become clear by the end of this section).
Note that each $\varepsilon_i$ is convex and hence
$\mathcal{DC}_n$ is a submonoid of $\mathcal{C}B_n$.
As $\varepsilon_i^t=\varepsilon_i$, the involution $(\cdot)^t$ restricts to
an involution on $\mathcal{DC}_n$.
From the definition it follows that $\mathcal{DC}_n$ is
a submonoid of the factor power of the symmetric group,
studied in \cite{GM00}.

\begin{figure} 
\begin{displaymath}
\varepsilon_1=\left(\begin{array}{cccc}1&1&0&0\\1&1&0&0
\\0&0&1&0\\0&0&0&1\end{array}\right)\quad
\varepsilon_2=\left(\begin{array}{cccc}1&0&0&0\\0&1&1&0
\\0&1&1&0\\0&0&0&1\end{array}\right)\quad
\varepsilon_3=\left(\begin{array}{cccc}1&0&0&0\\0&1&0&0
\\0&0&1&1\\0&0&1&1\end{array}\right)
\end{displaymath}
\caption{The elements $\varepsilon_1,\varepsilon_2,
\varepsilon_3\in B_4$}\label{fig1}
\end{figure}

\begin{remark}\label{rem5005}
{\rm 
It is often useful to remember the following combinatorial description of the
action of generators of $\mathcal{DC}_n$:
If $\xi\in B_n$, then the matrix $\varepsilon_i\xi$ (resp.\ $\xi\varepsilon_i$) is 
obtained from $\xi$ by replacing the $i$-th and the $i+1$-st rows (resp.\ columns)
of $\xi$ by their sum (over the Boolean semiring $\{0,1\}$).
}
\end{remark}

\subsection{Catalan quotients of $0$-Hecke monoids}\label{s3.3}

Denote by $\mathcal{C}_n^{+}$ and $\mathcal{C}_n^{-}$ the quotients
of $\mathcal{H}_n$ modulo the additional relations
$e_ie_{i+1}e_i=e_{i+1}e_{i}$ and $e_ie_{i+1}e_i=e_{i}e_{i+1}$
(for all appropriate $i$), respectively. We call these monoids
{\em Catalan quotients} of $\mathcal{H}_n$, as
$|\mathcal{C}_n^{+}|=|\mathcal{C}_n^{-}|=C_{n}=\frac{1}{n+1}\binom{2n}{n}$ 
is the $n$-th
Catalan number (see \cite{So} and \cite{GM2}). There is a more general
family of so-called {\em Kiselman quotients} of $\mathcal{H}_n$,
studied in \cite{GM2}.

\subsection{The enveloping Catalan monoid}\label{s3.4}

Recall that a transformation $\xi\colon\mathbf{N}\to\mathbf{N}$ is called {\em non-decreasing }
(resp. {\em non-increasing}) provided that for all $i\in\mathbf{N}$ we have
$i\leq \xi(i)$  (resp. $i\geq \xi(i)$).
Denote by $\mathbf{C}_n^+$ (resp.\ $\mathbf{C}_n^-$) the classical 
{\em Catalan monoid} of all order preserving and non-decreasing 
(resp.\ non-increasing) transformations on $\mathbf{N}$. The monoid $\mathbf{C}_n^+$
is ordered with respect to the pointwise ordering on functions.
Dually, the monoid $\mathbf{C}_n^-$ is ordered with respect to the opposite of
the pointwise ordering on functions.
We have $\mathbf{C}_n^+\cong \mathbf{C}_n^-\cong 
\mathcal{C}_n^{+}\cong \mathcal{C}_n^{-}$,
see \cite{So} and \cite{GM2}.
Define the {\em enveloping Catalan monoid} $\overline{\mathbf{C}}_n$
as $\mathbf{C}_n^{+}\times \mathbf{C}_n^{-}$. This is an ordered monoid
with the product order.

Let $\mathcal{PT}_n$ be the submonoid of $B_n$ consisting of all
partial transformations (i.e., $\xi\in B_n$ such that $|\xi(i)|\leq 1$
for all $i\in\mathbf{N}$, see \cite{GM}).
Define maps $\mathrm{max}\colon B_n\to \mathcal{PT}_n$ 
and $\mathrm{min}\colon B_n\to \mathcal{PT}_n$ as follows:
the relation $\mathrm{max}(\xi)$ 
(resp.\ $\mathrm{min}(\xi)$) contains $(i,j)$ if and only if
$i=\max(\xi(j))$ (resp.\ $i=\min(\xi(j))$). An example of how this works is
given in Figure~\ref{fig2}.  Our first essential observation is the following:

\begin{figure} 
{\small
\begin{displaymath}
\xi=\left(\begin{array}{cccc}1&1&1&0\\1&1&1&0
\\0&1&1&0\\0&0&1&1\end{array}\right)\quad
\mathrm{max}(\xi)=\left(\begin{array}{cccc}0&0&0&0\\1&0&0&0
\\0&1&0&0\\0&0&1&1\end{array}\right)\quad
\mathrm{min}(\xi)=\left(\begin{array}{cccc}1&1&1&0\\0&0&0&0
\\0&0&0&0\\0&0&0&1\end{array}\right)
\end{displaymath}
}
\caption{The transformations $\mathrm{max}$ and
$\mathrm{min}$}\label{fig2} 
\end{figure}

\begin{theorem}\label{thm4}
The map $\Theta\colon \mathcal{C}B_n\to \overline{\mathbf{C}}_n$,
$\xi\mapsto(\mathrm{max}(\xi),\mathrm{min}(\xi))$,
is an isomorphism of ordered monoids.
\end{theorem}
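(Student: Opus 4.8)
The plan is to verify in turn that $\Theta$ is well-defined, a monoid homomorphism, a bijection, and an order isomorphism, with the crux being a single lemma: for $\xi\in\mathcal{C}B_n$ the transformation $\mathrm{max}(\xi)$ is order-preserving and non-decreasing, and $\mathrm{min}(\xi)$ is order-preserving and non-increasing. Non-decreasingness of $\mathrm{max}(\xi)$ and non-increasingness of $\mathrm{min}(\xi)$ are immediate from reflexivity, since $j\in\xi(j)$ forces $\min(\xi(j))\leq j\leq\max(\xi(j))$; reflexivity also guarantees $\xi(j)\neq\emptyset$, so both images are total transformations and hence land in $\mathbf{C}_n^+$ and $\mathbf{C}_n^-$ respectively once order-preservation is known.

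For order-preservation of $\mathrm{max}(\xi)$ I would argue by contradiction. Suppose $j<j'$ but $M:=\max(\xi(j))>\max(\xi(j'))=:M'$. Reflexivity gives $M'\geq j'$, so $j<j'<M$. The row $\xi^t(M)$ contains $j$ (because $(M,j)\in\xi$) and contains $M$ (by reflexivity), hence, being an interval by the interval condition, it contains $j'$; but then $(M,j')\in\xi$, contradicting $M>M'$. This is the step where the interval condition on rows, rather than just on columns, is genuinely used, and I expect it to be the main obstacle. Order-preservation of $\mathrm{min}(\xi)$ follows by the dual argument.

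The homomorphism property then reduces cleanly to this lemma. Writing $(\xi\eta)(j)=\bigcup_{k\in\eta(j)}\xi(k)$ in the Boolean semiring and setting $m=\max(\eta(j))$, every $k\in\eta(j)$ satisfies $k\leq m$, so order-preservation of $\mathrm{max}(\xi)$ yields $\max(\xi(k))\leq\max(\xi(m))$; hence $\max((\xi\eta)(j))=\max(\xi(m))=\mathrm{max}(\xi)(\mathrm{max}(\eta)(j))$, which is exactly the rule for composing the transformations $\mathrm{max}(\xi)$ and $\mathrm{max}(\eta)$. The same computation with $\min$ and $m'=\min(\eta(j))$, using order-preservation of $\mathrm{min}(\xi)$, gives $\mathrm{min}(\xi\eta)=\mathrm{min}(\xi)\mathrm{min}(\eta)$, so $\Theta$ is multiplicative.

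For bijectivity and the order statement I would use that each column $\xi(j)$ is a nonempty interval, whence $\xi(j)=\{\min(\xi(j)),\ldots,\max(\xi(j))\}$ is recovered from the pair $(\mathrm{max}(\xi),\mathrm{min}(\xi))$; this gives injectivity and shows $\xi\subseteq\zeta$ iff $\mathrm{max}(\xi)\leq\mathrm{max}(\zeta)$ and $\mathrm{min}(\xi)\geq\mathrm{min}(\zeta)$ pointwise, i.e.\ iff $\Theta(\xi)\leq\Theta(\zeta)$ in the product order. For surjectivity, given $(f,g)\in\overline{\mathbf{C}}_n$ I would set $\xi(j):=\{g(j),\ldots,f(j)\}$; reflexivity holds since $g(j)\leq j\leq f(j)$, the columns are intervals by construction, and each row $\xi^t(i)=\{j:g(j)\leq i\leq f(j)\}$ is the intersection of the down-set $\{j:g(j)\leq i\}$ and the up-set $\{j:f(j)\geq i\}$ (using that $f,g$ are order-preserving), hence an interval. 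Thus $\xi\in\mathcal{C}B_n$ and $\Theta(\xi)=(f,g)$, completing the proof.
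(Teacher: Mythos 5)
Your proposal is correct and follows essentially the same route as the paper's proof: the same inverse map $\xi(j)=\{g(j),\ldots,f(j)\}$, and the same key computation for multiplicativity, where $\max\bigl((\xi\eta)(j)\bigr)$ is located at $m=\max(\eta(j))$ using order-preservation of $\mathrm{max}(\xi)$. You merely make explicit a few steps the paper dismisses as ``easy'' or ``routine'' (the interval-condition argument for order-preservation, and the verification that the rows of the constructed inverse are intervals), which is fine.
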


\begin{proof}
Every $\xi\in\mathcal{C}B_n$ is reflexive, which implies that, on the 
one hand,  both $\mathrm{max}(\xi)$ and $\mathrm{min}(\xi)$ are total
transformations of $\mathbf{N}$ and, on the other hand, that
$\mathrm{max}(\xi)$ is non-decreasing and $\mathrm{min}(\xi)$ is
non-increasing. Now the facts that $\mathrm{max}(\xi)\in\mathbf{C}_n^+$ and 
$\mathrm{min}(\xi)\in\mathbf{C}_n^-$ follow easily from the interval 
condition.  Therefore $\Theta$  is well-defined.

To show that $\Theta$ is bijective, we construct the inverse as follows:
Given $\alpha\in \mathbf{C}_n^+$ and $\beta\in \mathbf{C}_n^-$
we have $\alpha(i)\geq i\geq \beta(i)$ for all $i\in\mathbf{N}$.
define $\psi((\alpha,\beta))\in B_n$ as the unique binary relation such 
that  $(i,j)\in \xi$ if and only if $\alpha(j)\geq i\geq \beta(j)$.
It is easy to check that $\xi$ is, in fact, an element of
$\mathcal{C}B_n$ and that $\psi$ is the inverse of $\Theta$.

It is routine to verify that both $\Theta$ and $\psi$
are order preserving, so
it is left to check that $\Theta$ is a homomorphism. This amounts 
to checking that the maps $\mathrm{max}$ and $\mathrm{min}$ are 
homomorphisms when restricted to $\mathcal{C}B_n$. We will check it for
$\mathrm{max}$; for $\mathrm{min}$ one can use similar arguments.
Since for $\xi\in \mathcal{C}B_n$ the element $\mathrm{max}(\xi)$
is a total transformation, it is enough to check that every
$(i,j)\in \mathrm{max}(\xi)\mathrm{max}(\eta)$, 
$\xi,\eta\in \mathcal{C}B_n$, belongs 
to $\mathrm{max}(\xi\eta)$. Let $s\in\mathbf{N}$ be such 
that $(i,s)\in\mathrm{max}(\xi)$ and $(s,j)\in\mathrm{max}(\eta)$.
then $(i,s)\in\xi$ and $(s,j)\in\eta$ and thus $(i,j)\in\xi\eta$.
Assume $(i',j)\in \xi\eta$. Then there is
$t\in\mathbf{N}$ such that $(i',t)\in\xi$ and $(t,j)\in\eta$.
Then $t\leq s$ since $s=\mathrm{max}(\eta)(j)$. As
$\mathrm{max}(\xi)$ is order preserving, we get
$i'\leq i$. The claim follows.
\end{proof}

\subsection{Double Catalan monoids via $0$-Hecke monoids}\label{s3.5}

The following statement relates $\mathcal{H}_n$ to $\mathcal{DC}_n$:

\begin{proposition}\label{prop5}
There is a unique surjective homomorphism $\Psi\colon \mathcal{H}_n\to\mathcal{DC}_n$
of ordered monoids 
such that $\Psi(e_i)=\varepsilon_i$ for all $i\in\mathbf{N}'$.
\end{proposition}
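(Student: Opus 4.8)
The plan is to obtain $\Psi$ by ``booleanizing'' Proposition~\ref{prop1} directly, rather than by checking relations by hand. First I would introduce the map $\sigma\colon \mathcal{P}(S_n)\to B_n$ that sends a subset $X\subseteq S_n$ to the Boolean sum $\sigma(X):=\sum_{x\in X}\Phi(x)$ of the corresponding permutation matrices. By construction $\sigma(\{\mathrm{id}\})$ is the identity of $B_n$ and $\sigma(A_i)=\Phi(\mathrm{id})+\Phi(s_i)=\varepsilon_i$, so $\sigma$ carries the generators $A_i$ of the monoid $T$ of Proposition~\ref{prop1} to the generators $\varepsilon_i$ of $\mathcal{DC}_n$. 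I would then define $\Psi$ as the composite of the isomorphism $\varphi\colon\mathcal{H}_n\to T$ from Proposition~\ref{prop1} with $\sigma$, so that $\Psi(e_i)=\sigma(\varphi(e_i))=\sigma(A_i)=\varepsilon_i$.

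The key step is to verify that $\sigma$ is a homomorphism of ordered monoids. Order-preservation is immediate: if $X\subseteq Y$ then $\sigma(X)\subseteq\sigma(Y)$ as binary relations. For multiplicativity I would compute the $(a,b)$-entry of the Boolean product $\sigma(X)\sigma(Y)$: it equals $1$ precisely when there are $x\in X$ and $y\in Y$ with $(xy)(b)=a$, which is exactly the condition for $(a,b)$ to lie in $\sigma(XY)$, since $XY=\{xy:x\in X,\ y\in Y\}$ is the set product in $\mathcal{P}(S_n)$. Together with $\sigma(\{\mathrm{id}\})=\mathrm{id}$, this shows $\sigma$ is a homomorphism of ordered monoids.

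Granting this, the proposition follows quickly. The map $\varphi$ is an isomorphism of ordered monoids by Proposition~\ref{prop1} and Corollary~\ref{cor2}, so $\Psi=\sigma\circ\varphi$ is a homomorphism of ordered monoids with $\Psi(e_i)=\varepsilon_i$. Its image is the submonoid of $B_n$ generated by the $\varepsilon_i$, namely $\mathcal{DC}_n$, so $\Psi$ is surjective; and since the $e_i$ generate $\mathcal{H}_n$, any homomorphism with the prescribed values on generators agrees with $\Psi$, giving uniqueness.

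I expect the multiplicativity of $\sigma$ to be the only real obstacle, as it is the point where the set product of $\mathcal{P}(S_n)$ has to be matched with Boolean matrix multiplication; here the identity $1+1=1$ of the Boolean semiring is what makes ``collisions'' among the products $xy$ harmless. An alternative, more computational route would bypass $\sigma$ and instead check that the $\varepsilon_i$ satisfy the relations~\eqref{eq1} directly---idempotency and the far commutations are clear from the block form of the $\varepsilon_i$, while the braid relation reduces to verifying that $\varepsilon_i\varepsilon_{i+1}\varepsilon_i$ and $\varepsilon_{i+1}\varepsilon_i\varepsilon_{i+1}$ both equal the all-ones matrix on the block indexed by $\{i,i+1,i+2\}$---but this route would still require a separate argument for order-preservation, which is precisely what the composition with $\sigma$ supplies for free.
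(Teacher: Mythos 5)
Your proof is correct and is essentially the paper's own argument: your map $\sigma$ is exactly the semiring homomorphism $\overline{\Phi}\colon \mathcal{P}(S_n)\to B_n$ extending $\Phi$ that the paper introduces, and deducing the claim by composing it with the subset realization of Proposition~\ref{prop1} is precisely the paper's route. You merely spell out the multiplicativity, order-preservation, surjectivity, and uniqueness details that the paper leaves implicit.
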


\begin{proof}
The map $\Phi\colon S_n\to B_n$ extends to a semiring homomorphism
$\overline{\Phi}\colon \mathcal{P}(S_n)\to B_n$ (here $\mathcal{P}(S_n)$ is 
equipped with the obvious structure of a semiring with the union as addition).
Then $\overline{\Phi}(\{\mathrm{id},s_i\})=\varepsilon_i$ 
and the claim follows from Proposition~\ref{prop1}.
\end{proof}

From Proposition~\ref{prop5} and Subsection~\ref{s3.4} it follows that 
the monoid $\mathcal{DC}_n$ can be understood as the ``diagonal''
image of $\mathcal{H}_n$ in the enveloping Catalan monoid
$\overline{\mathbf{C}}_n$. In particular, the monoid
$\mathcal{DC}_n$ is the quotient of $\mathcal{H}_n$ modulo the
intersection of the kernels of canonical maps from 
$\mathcal{H}_n$ to $\mathcal{C}_n^+$ and $\mathcal{C}_n^-$.  Note that the mapping in Proposition~\ref{prop5} respects the canonical involutions.

For $w\in S_n$ define the {\em left-to-right maximum} transformation
$\alpha_w$ and the {\em right-to-left minimum} transformation $\beta_w$
of $\mathbf{N}$ for $i\in \mathbf{N}$ as follows (cf.~\cite{BBS}):
\begin{displaymath}
\begin{array}{rcl}
\alpha_w(i)&=&\max\{w(j):j=1,2,\dots,i\},\\ 
\beta_w(i)&=&\min\{w(j):j=i,i+1,\dots,n\}. 
\end{array}
\end{displaymath}
Obviously, we have $\alpha_w(i)\geq i\geq  \beta_w(i)$ for all $i\in \mathbf{N}$ and $\alpha_w\in \mathbf{C}_n^+$, $\beta_w\in \mathbf{C}_n^-$.
The next statement describes $\Psi(z_w)$ for all $w\in S_n$.

\begin{proposition}\label{prop6}
For every $w\in S_n$ the binary relation $\Psi(z_w)$ is the unique 
element in $\mathcal{C}B_n$ satisfying $\max(\Psi(z_w))=\alpha_w$ 
and $\min(\Psi(z_w))=\beta_w$.
\end{proposition}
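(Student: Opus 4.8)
The plan is to first pin down $\Psi(z_w)$ as an explicit Boolean matrix and then simply read off its $\max$ and $\min$. Combining Proposition~\ref{prop1} with Proposition~\ref{prop5}, the homomorphism $\Psi$ factors as the isomorphism $\mathcal H_n\cong T$, which by the subword property sends $z_w$ to the principal Bruhat ideal $\{u\in S_n:u\le w\}$, followed by the semiring homomorphism $\overline\Phi$. Hence $\Psi(z_w)=\overline\Phi(\{u:u\le w\})$, and since $\overline\Phi$ takes a subset to the join of the corresponding permutation matrices, the $p$-th column satisfies $\Psi(z_w)(p)=\{u(p):u\le w\}$. Therefore
\[
 \max(\Psi(z_w))(p)=\max\{u(p):u\le w\},\qquad \min(\Psi(z_w))(p)=\min\{u(p):u\le w\}.
\]
Because $\Psi(z_w)\in\mathcal{DC}_n\subseteq\mathcal{C}B_n$, Theorem~\ref{thm4} already gives that $\Psi(z_w)$ is \emph{the} unique element of $\mathcal{C}B_n$ with prescribed $\max$ and $\min$; so the proposition reduces to the two pointwise identities $\max\{u(p):u\le w\}=\alpha_w(p)$ and $\min\{u(p):u\le w\}=\beta_w(p)$.

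For the first identity I would prove the two inequalities separately. For ``$\ge$'', choose $j_0\le p$ with $w(j_0)=\max\{w(j):j\le p\}=\alpha_w(p)$; if $j_0=p$ take $u=w$, and otherwise $w(j_0)>w(p)$, so right multiplication by the transposition $(j_0,p)$ yields $u=w\cdot(j_0,p)<w$ with $u(p)=w(j_0)=\alpha_w(p)$, exhibiting an element of the ideal attaining the value $\alpha_w(p)$ in position $p$. For ``$\le$'' I would invoke a standard characterization of the Bruhat order, namely the tableau (Ehresmann) criterion: if $u\le w$ then, sorting $u(1),\dots,u(p)$ and $w(1),\dots,w(p)$ increasingly, the $k$-th entry for $u$ is at most the $k$-th entry for $w$ for each $1\le k\le p$. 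Taking $k=p$ gives $u(p)\le\max\{u(1),\dots,u(p)\}\le\max\{w(1),\dots,w(p)\}=\alpha_w(p)$. (Equivalently one may argue with the rank matrix $r_v(i,j)=|\{k\le i:v(k)\le j\}|$ and the criterion $u\le w\iff r_u\ge r_w$, observing that $r_w(p,\alpha_w(p))=p$ forces every $u\le w$ to have $u(p)\le\alpha_w(p)$.)

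The identity for $\min$ follows by an entirely dual argument: for ``$\le$'' one moves the value $\beta_w(p)=w(k_0)$ with $k_0\ge p$ leftward to position $p$ via $u=w\cdot(p,k_0)<w$, and for ``$\ge$'' the same Bruhat-order criterion bounds $u(p)$ from below. Alternatively the $\min$ case can be reduced to the $\max$ case through the Bruhat automorphism $w\mapsto w_0ww_0$, with $w_0$ the longest element of $S_n$, under which $\alpha_{w_0ww_0}(p)=n+1-\beta_w(n+1-p)$. The existence (``exhibiting an element'') directions are elementary single-transposition constructions; the genuine content, and the main obstacle, is the bounding direction, which is exactly where the combinatorics of the Bruhat order must be brought in.
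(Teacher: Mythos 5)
Your proposal is correct, but it takes a genuinely different route from the paper's. The paper proves Proposition~\ref{prop6} by induction on $\mathfrak{l}(w)$: for $\mathfrak{l}(ws_i)>\mathfrak{l}(w)$ it establishes the recursions \eqref{eq2}--\eqref{eq3} for $\alpha_{ws_i}$ and $\beta_{ws_i}$ and matches them against the row/column-sum description of multiplication by $\varepsilon_i$ from Remark~\ref{rem5005}; no global facts about the Bruhat order beyond the length condition $w(i)<w(i+1)$ are needed. You instead compute $\Psi(z_w)$ in closed form: by Propositions~\ref{prop1} and \ref{prop5} it is the Boolean sum $\sum_{u\leq w}\Phi(u)$ over the principal Bruhat ideal, so column $p$ is $\{u(p):u\leq w\}$, and the proposition reduces to $\max\{u(p):u\leq w\}=\alpha_w(p)$ and its dual, which you verify by exhibiting the extremal value via a single transposition $w\cdot(j_0,p)<w$ (attainment) and by the Ehresmann tableau/rank-matrix criterion (the bound); uniqueness is delegated to the injectivity of $\Theta$ in Theorem~\ref{thm4}, exactly as the paper implicitly does. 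Both arguments are sound. Your approach buys an explicit structural description of $\Psi(z_w)$ -- each column is pinned between $\beta_w(p)$ and $\alpha_w(p)$, and combined with convexity is the full interval -- and makes transparent \emph{why} the left-to-right maximum and right-to-left minimum appear; the cost is importing the nontrivial Ehresmann criterion as a black box (your $w\mapsto w_0ww_0$ reduction of the $\min$ case to the $\max$ case, with $\alpha_{w_0ww_0}(p)=n+1-\beta_w(n+1-p)$, is a clean way to halve that work and is correct, since conjugation by $w_0$ is a Bruhat-order automorphism). The paper's induction is more elementary and self-contained, which is presumably why the authors chose it, but it yields less information per step.
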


\begin{proof}
We prove this by induction on the length $\mathfrak{l}(w)$ of $w$. 
If $\mathfrak{l}(w)=0$, then $w$ is the identity transformation of
$\mathbf{N}$ and $\Psi(z_w)$ is the identity binary relation. In this
case the claim is easy to check.

Assume now that the claim is true for some $w$ and $i\in\mathbf{N}'$
is such that $\mathfrak{l}(ws_i)>\mathfrak{l}(w)$. The latter is 
equivalent to $w(i)<w(i+1)$. Define $u:=ws_i$. Then $\alpha_w$ and
$\alpha_u$ agree for all $j\neq i$ and $\beta_w$ and $\beta_u$ agree 
for all $j\neq i+1$. We claim that
\begin{eqnarray}
\alpha_u(i)&=&\alpha_u(i+1)\,=\,\max\{\alpha_w(i),\alpha_w(i+1)\}\,=\,\alpha_w(i+1) ;\label{eq2}\\
\beta_u(i)&=&\beta_u(i+1)\,=\,\min\{\beta_w(i),\beta_w(i+1)\}\,\,\,=\,\beta_w(i).\label{eq3}
\end{eqnarray}
Indeed, if $\alpha_w(i)>w(i+1)$, then
$\alpha_w(i)=\alpha_w(i+1)=\alpha_u(i)=\alpha_u(i+1)$. 
If $\alpha_w(i)<w(i+1)$, then
\begin{displaymath}
\alpha_u(i)=\alpha_u(i+1)=w(i+1)=\alpha_w(i+1)>\alpha_w(i). 
\end{displaymath}
This implies formula \eqref{eq2} in both cases and the formula \eqref{eq3} 
is proved similarly. Now the induction step follows from the inductive 
assumption and Remark~\ref{rem5005}.
\end{proof}

In particular, it follows that the natural map from $\mathcal{H}_n$ to $\mathbf{C}_n^+$
(resp. $\mathbf{C}_n^-$) takes $z_w$ to $\alpha_w$ (resp. $\beta_w$).

\section{Combinatorics of double Catalan monoids}\label{s4}

\subsection{Projection onto the Catalan quotient}\label{s4.2}

Let us consider the natural projection $\mathfrak{p}\colon \overline{\mathbf{C}}_n\to \mathbf{C}_n^+$ .
Define $\overline{\mathfrak{p}}:=\mathfrak{p}\circ\Theta\circ\Psi\colon 
\mathcal{H}_n\to \mathbf{C}_n^+$. For $\alpha\in \mathbf{C}_n^+$ set
$\overline{\mathfrak{p}}_{\alpha}:=\{w\in S_n:\overline{\mathfrak{p}}(z_w)=\alpha\}$.

\begin{proposition}\label{prop11}
\begin{enumerate}[$($a$)$]
\item\label{prop11.1} The map $\overline{\mathfrak{p}}$ is surjective.
\item\label{prop11.2} For every $\alpha\in \mathbf{C}_n^+$ the set
$\overline{\mathfrak{p}}_{\alpha}$ contains 
a unique $321$-avoiding permutation $\pi_{\alpha}$.
\item\label{prop11.3} The element $\pi_{\alpha}$ is the unique minimal element
of $\overline{\mathfrak{p}}_{\alpha}$ with respect to the Bruhat order.
\item\label{prop11.4} For every $\alpha\in \mathbf{C}_n^+$ the set
$\overline{\mathfrak{p}}_{\alpha}$ contains  
a unique $312$-avoiding permutation $\pi'_{\alpha}$.
\item\label{prop11.5} The element $\pi'_{\alpha}$ is the unique maximal element
of $\overline{\mathfrak{p}}_{\alpha}$ with respect to the Bruhat order.
\item\label{prop11.6} The set $\overline{\mathfrak{p}}_{\alpha}$ is
the Bruhat interval between $\pi_{\alpha}$ and $\pi'_{\alpha}$.
\end{enumerate}
\end{proposition}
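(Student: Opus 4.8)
The plan is to reduce everything to a combinatorial description of the fibers $\overline{\mathfrak p}_\alpha$ and then read off the six assertions, using that $\overline{\mathfrak p}(z_w)=\alpha_w$ where $\alpha_w(i)=\max\{w(1),\dots,w(i)\}$. First I would record a normal form for fiber elements. The condition $w\in\overline{\mathfrak p}_\alpha$ says exactly that the left-to-right maxima of $w$ sit at the jump positions $P$ of $\alpha$ with the prescribed jump values $V$. Hence an element of the fiber is produced by placing $V$ at $P$ (this part is forced) and filling the remaining positions $Q=\mathbf N\setminus P$ with the remaining values $U=\mathbf N\setminus V$ by an \emph{arbitrary} bijection subject to the single \emph{feasibility} constraint $w(q)<\alpha(q)$ for all $q\in Q$. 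A short check (each such $w$ really has $\alpha_w=\alpha$, and conversely every fiber element arises this way) identifies $\overline{\mathfrak p}_\alpha$ with the set of feasible bijections $Q\to U$.

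For (a) and (b) I would use the characterization that $w$ is $321$-avoiding if and only if its non-maxima increase: if two non-maxima formed a descent, the larger earlier maximum forcing one of them down completes a $321$ pattern, whereas if the maxima and the non-maxima both increase then $w$ is a union of two increasing sequences and so avoids $321$. In a fiber there is a unique increasing filling of $Q$ by $U$, giving uniqueness at once; the point to verify is that this filling is feasible. This is a counting step: at the $t$-th non-maximal position $q$ there are $J(q)=q-t$ jumps below $q$, so exactly $\alpha(q)-J(q)=t+(\alpha(q)-q)\ge t$ non-maximal values lie below $\alpha(q)$; the value placed at $q$ is the $t$-th smallest of $U$, hence lies below $\alpha(q)$ precisely because $\alpha(q)\ge q$. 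This proves (b), produces $\pi_\alpha$, and yields surjectivity (a).

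The heart of the matter is (c), (e) and (d), which I would derive from one \textbf{exchange lemma}: if $w$ is in the fiber and $q<q'$ are non-maximal positions with $w(q)<w(q')\le\alpha(q)-1$, then transposing the two entries keeps $w$ feasible (the larger value lands under the smaller cap $\alpha(q)-1\le\alpha(q')-1$) and strictly raises $w$ in the Bruhat order; the mirror version treats descents and lowers $w$. Driving these swaps toward the sorted configurations shows that the increasing filling $\pi_\alpha$ is the Bruhat-minimum and that the \emph{greedy largest-feasible} filling $\pi'_\alpha$ — scan $Q$ left to right, always inserting the largest available value below $\alpha(q)$ — is the Bruhat-maximum. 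I expect the main obstacle to be exactly here, in two coupled technical points. First, that the greedy rule never gets stuck: at the $t$-th non-maximal position there are $t+(\alpha(q)-q)$ admissible values and at most $t-1$ have been used, so at least $1+(\alpha(q)-q)\ge 1$ remain, again by $\alpha(q)\ge q$. Second, that $\pi'_\alpha$ is the unique $312$-avoiding element of the fiber: if a $312$-avoiding $w$ differed from the greedy filling, then at the first discrepancy $w$ passed over a larger feasible value $v$ and used it at a later position $q'$; since $v<\alpha(q)$ there is an earlier maximum exceeding $v$, and that maximum together with positions $q$ and $q'$ forms a $312$ pattern, a contradiction. The same mechanism shows the greedy filling itself avoids $312$, giving both existence and uniqueness for (d).

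Finally (f) is essentially free once the extrema are in hand. Since $\overline{\mathfrak p}=\mathfrak p\circ\Theta\circ\Psi$ is a composite of morphisms of ordered monoids, the map $w\mapsto\alpha_w$ is order-preserving from the Bruhat order to the pointwise order on $\mathbf C_n^+$. Hence $\pi_\alpha\le w\le\pi'_\alpha$ forces $\alpha=\alpha_{\pi_\alpha}\le\alpha_w\le\alpha_{\pi'_\alpha}=\alpha$, so $w$ lies in the fiber, while conversely every fiber element lies between its minimum and maximum by (c) and (e). Therefore $\overline{\mathfrak p}_\alpha=[\pi_\alpha,\pi'_\alpha]$, and the uniqueness of the minimal and maximal elements is automatic, completing the proof.
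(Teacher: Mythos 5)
Your proposal is correct, and it reaches the six claims by a genuinely more self-contained route than the paper. The paper constructs $\pi_{\alpha}$ by exactly your minimal filling (at a jump put $\alpha(i)$, otherwise the smallest unused value), but it then outsources the two pattern-theoretic facts you prove from scratch: uniqueness of the $321$-avoiding element in the fiber is cited to B{\'o}na [4.2], and claim (d) is deduced from the bijection of [Bo, Lemma~4.3] rather than from an explicit construction, so the paper never exhibits $\pi'_{\alpha}$ concretely. For (c) and (e) the paper uses a single pattern-swap: given $w$ in the fiber that is not $321$-avoiding (resp.\ not $312$-avoiding), it chooses an occurrence $w(i),w(j),w(k)$ with $i<j<k$ such that every intermediate entry $w(s)$, $j<s<k$, lies outside the interval $[\min(w(j),w(k)),\max(w(j),w(k))]$, and swaps $w(j)$ with $w(k)$; this converts a $321$ occurrence into a $312$ occurrence (and vice versa), moves strictly down (resp.\ up) in Bruhat order, and leaves $\alpha_w$ unchanged, whence Bruhat-minimal elements are $321$-avoiding and Bruhat-maximal ones are $312$-avoiding, and uniqueness does the rest. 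Your exchange lemma plays the same role but is organized differently: you swap arbitrary pairs of non-maximal entries under the feasibility cap $w(q')\le\alpha(q)-1$ and drive monotone chains to the two sorted configurations. This buys you something strictly stronger than the paper states or proves: that $\overline{\mathfrak p}_{\alpha}$ has a Bruhat \emph{minimum} and \emph{maximum} (not merely unique minimal and maximal elements), together with the explicit greedy description of $\pi'_{\alpha}$; it also makes (f) immediate in both directions, where the paper compresses it into the remark that all the homomorphisms are order preserving (which, as in your argument, gives the containment of the interval in the fiber). The trade-off is length: the paper's proof is a few lines plus citations, while yours rebuilds the fiber normal form, the feasibility counts, and both avoidance characterizations, all of which check out.
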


\begin{proof}
Given $\alpha\in \mathbf{C}_n^+$ define the value of $\pi_{\alpha}$ on $i\in\mathbf{N}$ 
recursively as follows:
\begin{displaymath}
\pi_{\alpha}(i)=
\begin{cases}
\alpha(i), & \text{if}\,\, \alpha(i)>\alpha(i-1);\\
\mathrm{min}(\mathbf{N}\setminus\{\pi_{\alpha}(j):j<i\}),&\text{else}. 
\end{cases}
\end{displaymath}
It is easy to check that $\pi_{\alpha}\in S_n$ and that $\overline{\mathfrak{p}}(\pi_{\alpha})=\alpha$,
which proves claim \eqref{prop11.1}. From the construction it also follows directly that
$\pi_{\alpha}$ is $321$-avoiding, which gives the existence part of claim \eqref{prop11.2}.
The uniqueness part of claim \eqref{prop11.2} is proved as in \cite[4.2]{Bo}.
Claim \eqref{prop11.4} follows from the bijection described in \cite[Lemma~4.3]{Bo}.

Assume that $w\in\overline{\mathfrak{p}}_{\alpha}$. If the element $w$ is not $321$-avoiding
(resp.\ $312$-avoiding), we can choose the corresponding $321$-pattern (resp.\ $312$-pattern)
$w(i),w(j),w(k)$ for some $i<j<k$ such that
\begin{displaymath}
\text{ either } w(s)<\min(w(j),w(k)) \text{ or } w(s)>\max(w(j),w(k))
\end{displaymath}
for all $s$ such that $j<s<k$. Then, swapping
$w(j)$ and $w(k)$ changes the $321$-pattern into a $312$-pattern, and vice versa.
At the same time, going from the $321$-pattern to a $312$-pattern we produce a smaller element
with respect to the Bruhat order, and vice versa. Moreover, this transformation clearly does not
affect $\alpha_w$. This implies claims  \eqref{prop11.3} and \eqref{prop11.5}.
Claim \eqref{prop11.6} follows from the fact that all our homomorphisms are order preserving.
\end{proof}

Some parts of Proposition~\ref{prop11} were observed in \cite{De}.

\subsection{Projection onto the double Catalan monoid}\label{s4.1}

For $\alpha\in \mathcal{DC}_n$ define $\Psi_{\alpha}:=\{w\in S_n: \Psi(z_w)=\alpha\}$.
Recall that a subset of a poset is  {\em convex} if it contains the intervals 
between all comparable points from this subset.
The main combinatorial result on double Catalan monoids is the following:

\begin{proposition}\label{prop7}
Let $\alpha\in \mathcal{DC}_n$.
\begin{enumerate}[$($a$)$]
\item\label{prop7.1} The set $\Psi_{\alpha}$ contains a unique $4321$-avoiding 
permutation $\tau_{\alpha}$.
\item\label{prop7.2} The element $\tau_{\alpha}$ is the unique Bruhat minimal 
element in $\Psi_{\alpha}$.
\item\label{prop7.3} An element $w\in \Psi_{\alpha}$ is Bruhat maximal if and
only if it is $4231$-avoiding. 
\item\label{prop7.4} The set $\Psi_{\alpha}$ is Bruhat convex.
\end{enumerate}
\end{proposition}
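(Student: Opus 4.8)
The plan is to use the characterization from Proposition~\ref{prop6} that $\Psi(z_w)$ is completely determined by the pair $(\alpha_w,\beta_w)$ of left-to-right maxima and right-to-left minima transformations. Thus $w\in\Psi_\alpha$ if and only if both $\alpha_w$ and $\beta_w$ are fixed. The key observation is how these two statistics change under an elementary Bruhat move: if $w(i)>w(i+1)$ and we swap them to obtain a Bruhat-smaller permutation $w'=ws_i$, then $\alpha_w=\alpha_{w'}$ and $\beta_w=\beta_{w'}$ precisely when neither $w(i)$ nor $w(i+1)$ is recorded as a left-to-right maximum or right-to-left minimum at the relevant positions. First I would isolate this local criterion, describing exactly which descents of $w$ can be swapped without altering the pair $(\alpha_w,\beta_w)$. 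I expect this to reduce to the condition that there exist indices $s<i$ with $w(s)>w(i)$ (so $w(i)$ is not a left-to-right maximum) and $t>i+1$ with $w(t)<w(i+1)$ (so $w(i+1)$ is not a right-to-left minimum).

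For parts \eqref{prop7.1} and \eqref{prop7.2}, the plan is to exhibit a Bruhat-minimal element of $\Psi_\alpha$ and show it is the unique $4321$-avoiding permutation there. Following the strategy of Proposition~\ref{prop11}, I would construct $\tau_\alpha$ directly from the data $(\alpha_w,\beta_w)$: the values forced to be left-to-right maxima and right-to-left minima are pinned down, and the remaining values should be filled in by the greedy minimal choice, which forces $4321$-avoidance. To see Bruhat-minimality, I would argue that any descent of $w$ that can be lowered without changing $\alpha_w$ and $\beta_w$ must be lowered; the local criterion above shows that a non-minimal element admits such a move, and that a move from a $4321$-pattern strictly decreases the element in Bruhat order while preserving the pair. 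The uniqueness of the $4321$-avoiding element should follow because the two projections to $\mathbf{C}_n^+$ and $\mathbf{C}_n^-$ each have unique $321$-avoiding (resp.\ $123$-avoiding) fibers by the Catalan-monoid theory of Subsection~\ref{s3.3}, and a $4321$-avoiding permutation is pinned by these two conditions simultaneously.

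For part \eqref{prop7.3}, I would characterize Bruhat-maximal elements as those admitting no \emph{upward} swap preserving the pair $(\alpha_w,\beta_w)$. By the dual of the local criterion, $w$ is maximal exactly when every ascent $w(i)<w(i+1)$ that could be raised is blocked, and I expect this blocking condition to translate precisely into the absence of a $4231$-pattern. The natural route is to show that a raising move on an ascent preserves $(\alpha_w,\beta_w)$ if and only if it destroys a $4231$-pattern, so maximality is equivalent to $4231$-avoidance. The main obstacle I anticipate is verifying this pattern translation carefully: one must check that the positions whose swap leaves both $\alpha_w$ and $\beta_w$ unchanged are exactly the ``inner'' pairs of a $4231$-occurrence, and conversely that every $4231$-pattern yields such a move. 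This requires a somewhat delicate case analysis tracking which entries serve as running maxima and minima.

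Finally, part \eqref{prop7.4} should follow from order-theoretic generalities together with the fact established earlier that $\Psi$, $\Theta$, and the projection maps are all order preserving. Since $\Theta\circ\Psi$ maps $z_w$ to $(\alpha_w,\beta_w)$ monotonically and $\Psi_\alpha$ is a fiber of this order-preserving map, I would argue that if $u\le v$ in $\Psi_\alpha$ and $u\le y\le v$ in the Bruhat order, then $(\alpha_y,\beta_y)$ is squeezed between the equal pairs $(\alpha_u,\beta_u)$ and $(\alpha_v,\beta_v)$ in the product order on $\overline{\mathbf{C}}_n$, forcing equality and hence $y\in\Psi_\alpha$. This is the cleanest part and relies only on monotonicity already proved in Theorem~\ref{thm4} and Proposition~\ref{prop5}.
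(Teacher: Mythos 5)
Your overall strategy is the paper's: the paper's proof of Proposition~\ref{prop7} is a two-line reduction, using Proposition~\ref{prop6} to identify $\Psi_\alpha$ with the fiber of $z_w\mapsto(\alpha_w,\beta_w)$, citing \cite[Lemma~4.21]{Bo} for claim \eqref{prop7.1}, and handling the rest ``as in Proposition~\ref{prop11}'' by the pattern-swapping moves. Your local swap criterion, your treatment of \eqref{prop7.3} via raising moves, and your proof of \eqref{prop7.4} from order-preservation (the fiber of an order-preserving map into a poset is automatically convex) all match this plan; indeed your argument for \eqref{prop7.4} is correct and more explicit than the paper's.

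However, there is a genuine gap precisely at the step the paper outsources to \cite[Lemma~4.21]{Bo}: the uniqueness in \eqref{prop7.1}. Your proposed derivation --- that the two projections to $\mathbf{C}_n^+$ and $\mathbf{C}_n^-$ have fibers with unique $321$-avoiding (resp.\ ``$123$-avoiding'') elements, and that a $4321$-avoiding permutation is ``pinned by these two conditions simultaneously'' --- does not work. A $4321$-avoiding element of $\Psi_\alpha$ is in general neither $321$-avoiding nor $123$-avoiding (and the $\mathbf{C}_n^-$-fiber statement should in any case involve $321$-avoidance again, via reverse-complement symmetry, not $123$-avoidance), so uniqueness within each projection's fiber says nothing about uniqueness of $4321$-avoiders in the intersection. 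The missing idea is the following observation: if $w$ is $4321$-avoiding, then the entries of $w$ that are neither left-to-right maxima nor right-to-left minima form an increasing subsequence --- two such entries $w(i)>w(j)$ with $i<j$ would combine with a witness $w(s)>w(i)$, $s<i$, and a witness $w(t)<w(j)$, $t>j$, to a $4321$-pattern. Since $\alpha_w$ determines the positions and values of the left-to-right maxima and $\beta_w$ those of the right-to-left minima, the remaining values must occupy the remaining positions in increasing order, so $w$ is reconstructed uniquely from $(\alpha_w,\beta_w)$; this is the content of the cited lemma of B{\'o}na. Two smaller repairs: your swap criterion should be stated for arbitrary transpositions $t_{jk}$ (with witnesses $w(i)>w(k)$ for some $i<j$ and $w(l)<w(j)$ for some $l>k$), not only for adjacent ones, since lowering a $4321$-pattern swaps its two middle entries, which need not be adjacent; and in \eqref{prop7.3} the implication ``$4231$-avoiding $\Rightarrow$ maximal'' needs the already-proved convexity \eqref{prop7.4} together with the chain property of Bruhat order, so that a non-maximal $w$ has a Bruhat \emph{cover} inside $\Psi_\alpha$, to which the transposition analysis then applies --- you should make this dependence explicit.
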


\begin{proof}
Using Proposition~\ref{prop6}, the proof of claim \eqref{prop7.1}  is similar 
to the proof of \cite[Lemma~4.21]{Bo}. The rest is similar to the proof 
of Proposition~\ref{prop11}.
\end{proof}

Proposition~\ref{prop7} reduces enumeration of double Catalan monoids to that of 
$4321$-avoiding permutations. There are several formulae (due to I.~Gessel, \cite{Ge}),
enumerating the latter, see \cite[4.4.3]{Bo} for details. Note that 
the set $\Psi_{\alpha}$ might contain several Bruhat maximal elements in general,
see \cite[Theorem~4.18]{Bo}.

\subsection{First derivative of the Kreweras involution}\label{s4.3}

Denote by $\mathfrak{D}_n$ the set of all Dyck paths of semilength
$n$ (i.e., all lattice paths from $(0,0)$ to $(2n,0)$, with steps $(1,1)$ or $(1,-1)$,  
that never go below the $x$-axis). Let 
$\Delta\colon \mathbf{C}_n^+\to \mathfrak{D}_n$ be the usual bijection
defined by outlining, from below, the entries ``$1$'' in the matrix
of an element in $\mathbf{C}_n^+$ and then rotating the path clockwise 
by $135^{\circ}$, as shown in Figure~\ref{fig3}.

\begin{figure}
\begin{picture}(270.00,80.00)
\put(07.50,15.00){\makebox(0,0)[cc]{\tiny $0$}}
\put(07.50,25.00){\makebox(0,0)[cc]{\tiny $0$}}
\put(07.50,35.00){\makebox(0,0)[cc]{\tiny $0$}}
\put(07.50,45.00){\makebox(0,0)[cc]{\tiny $1$}}
\put(07.50,55.00){\makebox(0,0)[cc]{\tiny $0$}}
\put(07.50,65.00){\makebox(0,0)[cc]{\tiny $0$}}
\put(15.00,15.00){\makebox(0,0)[cc]{\tiny $0$}}
\put(15.00,25.00){\makebox(0,0)[cc]{\tiny $0$}}
\put(15.00,35.00){\makebox(0,0)[cc]{\tiny $0$}}
\put(15.00,45.00){\makebox(0,0)[cc]{\tiny $1$}}
\put(15.00,55.00){\makebox(0,0)[cc]{\tiny $0$}}
\put(15.00,65.00){\makebox(0,0)[cc]{\tiny $0$}}
\put(22.50,15.00){\makebox(0,0)[cc]{\tiny $0$}}
\put(22.50,25.00){\makebox(0,0)[cc]{\tiny $0$}}
\put(22.50,35.00){\makebox(0,0)[cc]{\tiny $1$}}
\put(22.50,45.00){\makebox(0,0)[cc]{\tiny $0$}}
\put(22.50,55.00){\makebox(0,0)[cc]{\tiny $0$}}
\put(22.50,65.00){\makebox(0,0)[cc]{\tiny $0$}}
\put(30.00,15.00){\makebox(0,0)[cc]{\tiny $0$}}
\put(30.00,25.00){\makebox(0,0)[cc]{\tiny $1$}}
\put(30.00,35.00){\makebox(0,0)[cc]{\tiny $0$}}
\put(30.00,45.00){\makebox(0,0)[cc]{\tiny $0$}}
\put(30.00,55.00){\makebox(0,0)[cc]{\tiny $0$}}
\put(30.00,65.00){\makebox(0,0)[cc]{\tiny $0$}}
\put(37.50,15.00){\makebox(0,0)[cc]{\tiny $0$}}
\put(37.50,25.00){\makebox(0,0)[cc]{\tiny $1$}}
\put(37.50,35.00){\makebox(0,0)[cc]{\tiny $0$}}
\put(37.50,45.00){\makebox(0,0)[cc]{\tiny $0$}}
\put(37.50,55.00){\makebox(0,0)[cc]{\tiny $0$}}
\put(37.50,65.00){\makebox(0,0)[cc]{\tiny $0$}}
\put(45.00,15.00){\makebox(0,0)[cc]{\tiny $1$}}
\put(45.00,25.00){\makebox(0,0)[cc]{\tiny $0$}}
\put(45.00,35.00){\makebox(0,0)[cc]{\tiny $0$}}
\put(45.00,45.00){\makebox(0,0)[cc]{\tiny $0$}}
\put(45.00,55.00){\makebox(0,0)[cc]{\tiny $0$}}
\put(45.00,65.00){\makebox(0,0)[cc]{\tiny $0$}}
\dottedline{1}(04.00,70.00)(04.00,40.00)
\dottedline{1}(18.75,40.00)(04.00,40.00)
\dottedline{1}(18.75,40.00)(18.75,30.00)
\dottedline{1}(26.25,30.00)(18.75,30.00)
\dottedline{1}(26.25,30.00)(26.25,20.00)
\dottedline{1}(41.25,20.00)(26.25,20.00)
\dottedline{1}(41.25,20.00)(41.25,10.00)
\dottedline{1}(48.75,10.00)(41.25,10.00)
\put(60.00,40.00){\makebox(0,0)[cc]{\tiny $\to$}}
\put(125.00,40.00){\makebox(0,0)[cc]{\tiny $\to$}}
\put(140.00,40.00){\makebox(0,0)[cc]{\tiny $\bullet$}}
\put(260.00,40.00){\makebox(0,0)[cc]{\tiny $\bullet$}}
\put(260.00,30.00){\makebox(0,0)[cc]{\tiny $(12,0)$}}
\put(140.00,30.00){\makebox(0,0)[cc]{\tiny $(0,0)$}}
\drawline(74.00,70.00)(74.00,40.00)
\drawline(88.75,40.00)(74.00,40.00)
\drawline(88.75,40.00)(88.75,30.00)
\drawline(96.25,30.00)(88.75,30.00)
\drawline(96.25,30.00)(96.25,20.00)
\drawline(111.25,20.00)(96.25,20.00)
\drawline(111.25,20.00)(111.25,10.00)
\drawline(118.75,10.00)(111.25,10.00)
\drawline(140.00,40.00)(150.00,50.00)
\drawline(160.00,40.00)(150.00,50.00)
\drawline(160.00,40.00)(180.00,60.00)
\drawline(190.00,50.00)(180.00,60.00)
\drawline(190.00,50.00)(200.00,60.00)
\drawline(210.00,50.00)(200.00,60.00)
\drawline(210.00,50.00)(230.00,70.00)
\drawline(260.00,40.00)(230.00,70.00)
\end{picture}
\caption{The map $\Delta$}\label{fig3}
\end{figure}

Define the map $\mathfrak{i}\colon \mathfrak{D}_n\to\mathfrak{D}_n$
as follows: for $\alpha\in\mathbf{C}_n^+$ set 
\begin{equation}\label{eq4}
\mathfrak{i}(\Delta(\alpha)):=
\Delta(\overline{\mathfrak{p}}(z_{\pi_{\alpha}^{-1}})). 
\end{equation}
Note that $\pi_{\alpha}$ was defined to be $321$-avoiding
(see Subsection~\ref{s4.2}). It follows that the inverse 
$\pi_{\alpha}^{-1}$ is $321$-avoiding as well (and hence
coincides with $\pi_{\beta}$ for some $\beta\in \mathbf{C}_n^+$).

An element $\alpha\in\mathbf{C}_n^+$ corresponds, via $\Delta$, to an irreducible 
Dyck path if and only if it has no fixed-point other than $n$. This in turn 
is equivalent to $\pi_{\alpha}$ not having an invariant 
subset of the form $\{1,2,\dots,k\}$ for $k<n$. In particular,
$\pi_{\alpha}$ has no fixed-point. For such $\alpha$
the fact that $\pi_{\alpha}$ is $321$-avoiding can be 
reformulated as follows: given $i,j\in\mathbf{N}$, $i<j$,
then $\pi_{\alpha}(i)<i$ and $\pi_{\alpha}(j)<j$ imply
$\pi_{\alpha}(i)<\pi_{\alpha}(j)$ (and, similarly, 
$\pi_{\alpha}(i)>i$ and $\pi_{\alpha}(j)>j$ imply
$\pi_{\alpha}(i)<\pi_{\alpha}(j)$). This yields that 
on irreducible Dyck paths the map $\mathfrak{i}$ defined in 
\eqref{eq4} coincides with  the first derivative of the involution 
on $\mathfrak{D}_n$ constructed by Kreweras in \cite{Kr}. This extends
to reducible Dyck paths is the obvious way. The derivative appears,
for example, in \cite{BBS}. Thus \eqref{eq4} gives a nice interpretation of
this derivative via inversion of $321$-avoiding permutations.
An example of how this works is given in Figure~\ref{fig4}.

\begin{figure}
\begin{picture}(290.00,70.00)
\put(07.50,15.00){\makebox(0,0)[cc]{\tiny $0$}}
\put(07.50,25.00){\makebox(0,0)[cc]{\tiny $0$}}
\put(07.50,35.00){\makebox(0,0)[cc]{\tiny $0$}}
\put(07.50,45.00){\makebox(0,0)[cc]{\tiny $1$}}
\put(07.50,55.00){\makebox(0,0)[cc]{\tiny $0$}}
\put(15.00,15.00){\makebox(0,0)[cc]{\tiny $0$}}
\put(15.00,25.00){\makebox(0,0)[cc]{\tiny $1$}}
\put(15.00,35.00){\makebox(0,0)[cc]{\tiny $0$}}
\put(15.00,45.00){\makebox(0,0)[cc]{\tiny $0$}}
\put(15.00,55.00){\makebox(0,0)[cc]{\tiny $0$}}
\put(22.50,15.00){\makebox(0,0)[cc]{\tiny $1$}}
\put(22.50,25.00){\makebox(0,0)[cc]{\tiny $0$}}
\put(22.50,35.00){\makebox(0,0)[cc]{\tiny $0$}}
\put(22.50,45.00){\makebox(0,0)[cc]{\tiny $0$}}
\put(22.50,55.00){\makebox(0,0)[cc]{\tiny $0$}}
\put(30.00,15.00){\makebox(0,0)[cc]{\tiny $0$}}
\put(30.00,25.00){\makebox(0,0)[cc]{\tiny $0$}}
\put(30.00,35.00){\makebox(0,0)[cc]{\tiny $0$}}
\put(30.00,45.00){\makebox(0,0)[cc]{\tiny $0$}}
\put(30.00,55.00){\makebox(0,0)[cc]{\tiny $1$}}
\put(37.50,15.00){\makebox(0,0)[cc]{\tiny $0$}}
\put(37.50,25.00){\makebox(0,0)[cc]{\tiny $0$}}
\put(37.50,35.00){\makebox(0,0)[cc]{\tiny $1$}}
\put(37.50,45.00){\makebox(0,0)[cc]{\tiny $0$}}
\put(37.50,55.00){\makebox(0,0)[cc]{\tiny $0$}}
\dottedline{1}(03.00,59.00)(03.00,40.00)
\dottedline{1}(11.25,40.00)(03.00,40.00)
\dottedline{1}(11.25,40.00)(11.25,20.00)
\dottedline{1}(18.75,20.00)(11.25,20.00)
\dottedline{1}(18.75,20.00)(18.75,10.00)
\dottedline{1}(41.25,10.00)(18.75,10.00)
\dashline{2}(03.00,59.00)(33.00,59.00)
\dashline{2}(33.00,40.00)(33.00,59.00)
\dashline{2}(33.00,40.00)(41.25,40.00)
\dashline{2}(41.25,10.00)(41.25,40.00)
\put(50.00,35.00){\makebox(0,0)[cc]{\tiny $\to$}}
\put(110.00,15.00){\makebox(0,0)[cc]{\tiny $\delta$}}
\put(230.00,15.00){\makebox(0,0)[cc]{\tiny $\mathfrak{i}(\delta)$}}
\drawline(60.00,25.00)(90.00,55.00)
\drawline(100.00,45.00)(90.00,55.00)
\drawline(100.00,45.00)(110.00,55.00)

\drawline(130.00,35.00)(110.00,55.00)
\drawline(130.00,35.00)(140.00,45.00)
\drawline(160.00,25.00)(140.00,45.00)
\drawline(180.00,25.00)(210.00,55.00)
\drawline(220.00,45.00)(210.00,55.00)
\drawline(220.00,45.00)(240.00,65.00)
\drawline(280.00,25.00)(240.00,65.00)
\end{picture}
\caption{First derivative of the Kreweras involution}\label{fig4}
\end{figure}

\subsection{Admissible pairs of Dyck paths}\label{s4.4}

An element $\xi\in \mathcal{CB}_n$ is determined by a pair of Dyck paths 
corresponding to $\mathrm{max}(\xi)$ and $\mathrm{max}(\xi^t)$. It is natural
to ask which pairs of Dyck paths correspond to elements of the double Catalan monoid.
Equivalently, given $w\in S_n$ we have the pair of Dyck paths defined as follows: 
\begin{displaymath}
\Delta_w:=(\Delta(\overline{\mathfrak{p}}(z_w)),
\Delta(\overline{\mathfrak{p}}(z_{w^{-1}}))).
\end{displaymath}
A pair of Dyck paths of the form $\Delta_w$ is called {\em admissible}. 
Admissible pairs of Dyck paths were recently described in \cite{BBS}
in terms of the first derivative of the Kreweras involution and a certain partial
order on $\mathbf{C}_n^+$. In the previous subsection we gave an algebraic
interpretation of the first derivative of the Kreweras involution. In this
subsection we give an algebraic interpretation of the partial order
on $\mathbf{C}_n^+$ used in \cite{BBS} and hence provide an algebraic 
interpretation of the main result of \cite{BBS}.

Denote by $\prec$ the order on $\mathbf{C}_n^+$ defined as follows:
for $\alpha,\beta\in \mathbf{C}_n^+$ we set $\alpha\prec\beta$ if and only if 
there exist $\gamma_1,\gamma_2\in \mathbf{C}_n^+$ such that $\beta=\gamma_1\alpha$
and $\beta=\alpha\gamma_2$.  This is the dual of what is classically called the 
$\mathcal H$-order in the semigroup theory literature (here $\mathcal H$ stands for the
corresponding Green's relation).  

Every map $f\colon X\to Y$ defines an equivalence relation $\rho_f$ on $X$, called the
{\em kernel partition of $f$}, as follows; for $a,b\in X$ we have $(a,b)\in \rho_f$
if and only if $f(a)=f(b)$.

Denote by $\prec'$ the preimage under $\Delta$ of the transitive 
closure of the relation $\leq $ on $\mathfrak{D}_n$ defined in \cite{BBS} as follows:
Let $\delta$ be a Dyck path (take, for example,  the left path in Figure~\ref{fig5}).
The bullet points, as in Figure~\ref{fig5}, are called {\em valleys}. To get a cover of
$\delta$ with respect to $\leq $ one is allowed to choose an arbitrary (in particular, empty)
collection of consecutive valleys of $\delta$ and ``rectangularly complete''
them to peaks as shown on the right hand side of Figure~\ref{fig5} (for the second and the third
valleys from the left). Our principal observation here is the following:

\begin{figure}
\begin{picture}(220.00,50.00)
\put(50.00,05.00){\makebox(0,0)[cc]{\tiny $\delta$}}
\put(175.00,05.00){\makebox(0,0)[cc]{\tiny $\delta'$}}
\put(110.00,05.00){\makebox(0,0)[cc]{\tiny $\leq$}}
\put(25.00,20.00){\makebox(0,0)[cc]{\tiny $\bullet$}}
\put(50.00,25.00){\makebox(0,0)[cc]{\tiny $\bullet$}}
\put(65.00,30.00){\makebox(0,0)[cc]{\tiny $\bullet$}}
\put(80.00,25.00){\makebox(0,0)[cc]{\tiny $\bullet$}}
\dottedline{2}(170.00,25.00)(160.00,35.00)
\dottedline{2}(170.00,25.00)(180.00,35.00)
\dottedline{2}(185.00,30.00)(180.00,35.00)
\dottedline{2}(185.00,30.00)(190.00,35.00)
\drawline(05.00,20.00)(15.00,30.00)
\drawline(25.00,20.00)(15.00,30.00)
\drawline(25.00,20.00)(40.00,35.00)
\drawline(50.00,25.00)(40.00,35.00)
\drawline(50.00,25.00)(60.00,35.00)
\drawline(65.00,30.00)(60.00,35.00)
\drawline(65.00,30.00)(70.00,35.00)
\drawline(80.00,25.00)(70.00,35.00)
\drawline(80.00,25.00)(85.00,30.00)
\drawline(95.00,20.00)(85.00,30.00)
\drawline(125.00,20.00)(135.00,30.00)
\drawline(145.00,20.00)(135.00,30.00)
\drawline(145.00,20.00)(170.00,45.00)
\drawline(180.00,35.00)(170.00,45.00)
\drawline(180.00,35.00)(185.00,40.00)
\drawline(200.00,25.00)(185.00,40.00)
\drawline(200.00,25.00)(205.00,30.00)
\drawline(215.00,20.00)(205.00,30.00)
\end{picture}
\caption{A cover $\delta'$ of $\delta$ with respect to $\leq$}\label{fig5}
\end{figure}

\begin{figure}
\begin{picture}(100.00,60.00)
\drawline(05.00,05.00)(20.00,20.00)
\drawline(25.00,15.00)(20.00,20.00)
\drawline(25.00,15.00)(35.00,25.00)
\drawline(50.00,10.00)(35.00,25.00)
\drawline(50.00,10.00)(60.00,20.00)
\drawline(65.00,15.00)(60.00,20.00)
\drawline(65.00,15.00)(70.00,20.00)
\drawline(80.00,10.00)(70.00,20.00)
\drawline(80.00,10.00)(85.00,15.00)
\drawline(95.00,05.00)(85.00,15.00)
\dottedline{2}(20.00,20.00)(55.00,55.00)
\dottedline{2}(35.00,25.00)(65.00,55.00)
\dottedline{2}(60.00,20.00)(90.00,55.00)
\dottedline{2}(70.00,20.00)(95.00,50.00)
\dottedline{2}(85.00,15.00)(95.00,25.00)
\dashline{2}(20.00,20.00)(05.00,35.00)
\dashline{2}(35.00,25.00)(05.00,55.00)
\dashline{2}(60.00,20.00)(20.00,55.00)
\dashline{2}(70.00,20.00)(35.00,55.00)
\dashline{2}(85.00,15.00)(45.00,55.00)
\end{picture}
\caption{First part of the proof of Proposition~\ref{prop75}}\label{fig6}
\end{figure}

\begin{proposition}\label{prop75}
The relations  $\prec$ and $\prec'$ coincide.
\end{proposition}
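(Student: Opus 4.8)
The plan is to pass from the algebraic definition of $\prec$ to an explicit combinatorial description of it on the transformations in $\mathbf{C}_n^+$, and then to read that description off the Dyck paths under $\Delta$, where it should match the covering relation $\leq$ of \cite{BBS}. The starting observation is that $\prec$ is a genuine partial order: if $\beta=\gamma_1\alpha$ with $\gamma_1\in\mathbf{C}_n^+$, then $\beta(i)=\gamma_1(\alpha(i))\geq\alpha(i)$ for all $i$, so $\alpha\prec\beta$ already forces $\alpha\leq\beta$ pointwise; antisymmetry follows at once, and transitivity is immediate by composing the witnessing elements $\gamma_1,\gamma_2$.

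The key step is the characterization: for $\alpha,\beta\in\mathbf{C}_n^+$ one has $\alpha\prec\beta$ if and only if (i) $\alpha\leq\beta$ pointwise, (ii) $\mathrm{im}(\beta)\subseteq\mathrm{im}(\alpha)$, and (iii) $\beta$ is constant on each fibre of $\alpha$ (equivalently $\ker(\alpha)\subseteq\ker(\beta)$ as equivalence relations). Necessity of (ii) is read off $\beta=\alpha\gamma_2$ and necessity of (iii) off $\beta=\gamma_1\alpha$. For sufficiency I would build $\gamma_1,\gamma_2\in\mathbf{C}_n^+$ explicitly: take $\gamma_2(i)$ to be a suitable point of the interval $\alpha^{-1}(\beta(i))$, and define $\gamma_1$ on $\mathrm{im}(\alpha)$ by $\gamma_1(\alpha(i))=\beta(i)$; each is well defined and extends to an order preserving non-decreasing self-map of $\mathbf{N}$ precisely because (i)--(iii) hold.

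Next I would transport (i)--(iii) through $\Delta$. Under the bijection of Figure~\ref{fig3}, passing from $\alpha$ to a pointwise larger $\beta$ corresponds to pushing the path upward; the condition $\mathrm{im}(\beta)\subseteq\mathrm{im}(\alpha)$ corresponds to creating no new peak heights; and the condition that $\beta$ be constant on the fibres of $\alpha$ corresponds to never splitting a maximal flat run of the path. Together these say that $\Delta(\beta)$ arises from $\Delta(\alpha)$ by raising some valleys to the level of neighbouring peaks while merging the intervening flats, which is exactly the rectangular completion of valleys to peaks described for $\leq$ and drawn in Figure~\ref{fig5}.

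With this dictionary the statement splits into two inclusions. For $\prec'\subseteq\prec$ it suffices, since $\prec$ is transitive, to verify that a single cover $\delta\leq\delta'$ of \cite{BBS} satisfies (i)--(iii) for the corresponding $\alpha,\beta$; this is the computation illustrated in Figure~\ref{fig6}, where the rectangular shape of the completion is precisely what makes (ii) and (iii) hold at the same time (a purely horizontal or purely vertical move would break one of them). For the reverse inclusion $\prec\subseteq\prec'$ I would induct on the area enclosed between $\Delta(\alpha)$ and $\Delta(\beta)$: given $\alpha\prec\beta$ with $\alpha\neq\beta$, I locate the leftmost maximal block of consecutive valleys of $\Delta(\alpha)$ lying strictly below $\Delta(\beta)$, complete exactly that block to peaks, and check that the result $\alpha'$ lies in $\mathbf{C}_n^+$ and still satisfies (i)--(iii) relative to $\beta$, so that $\alpha\leq\alpha'\prec\beta$ and the induction proceeds. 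I expect this backward inclusion to be the main obstacle: one must ensure that the completion at each stage uses only \emph{consecutive} valleys and stays weakly below the target profile $\Delta(\beta)$, so that each inductive step is an honest \cite{BBS} cover and not merely some larger element of $\mathbf{C}_n^+$ admissible under (i)--(iii); this is exactly where the geometry of Figures~\ref{fig5}--\ref{fig6} must be used with care.
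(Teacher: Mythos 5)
Your proposal is correct in outline but organizes the argument quite differently from the paper. The paper never isolates your characterization of $\prec$ by the three conditions (i)--(iii); it uses only the necessity half of (ii)--(iii) (image containment from $\beta=\alpha\gamma_2$, kernel coarsening from $\beta=\gamma_1\alpha$) to prove $\prec\,\subseteq\,\prec'$ via the ascent/descent overlap picture of Figure~\ref{fig6}, and it proves $\prec'\,\subseteq\,\prec$ \emph{constructively}: it computes the effect of left and right multiplication by the generator images $\gamma_i=\overline{\mathfrak{p}}(e_i)$ on Dyck paths (Figure~\ref{fig7}) and realizes each cover of $\leq$ by explicit products of these generators. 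Your route replaces that constructive step by the sufficiency half of your characterization; this is correct as sketched, provided you take $\gamma_2(i)=\max\alpha^{-1}(\beta(i))$ (the minimum of the fibre can violate the non-decreasing condition), and it buys a clean, reusable order-theoretic description of the dual $\mathcal H$-order on $\mathbf{C}_n^+$, whereas the paper's version buys an explicit realization of each cover by generator products. One caution on your ``dictionary'' paragraph: conditions (i)--(iii) are \emph{not} equivalent to a single cover (already for $n=3$ the identity is $\prec$-below the constant map with value $3$, but not by one rectangular completion, since a cover completes each chosen valley only to the rectangle of its adjacent maximal runs); your two-inclusion formulation with transitivity and induction is the correct reading, so that paragraph should stand only as motivation.

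The step you flag as the main obstacle---that the greedy completion of the leftmost maximal block of consecutive valleys of $\Delta(\alpha)$ lying strictly below $\Delta(\beta)$ yields an honest cover $\alpha'$ still satisfying (i)--(iii) relative to $\beta$---you state precisely but do not verify, and a referee would demand it. It does go through, and the checks are exactly the following. Note first that, given (i) and (iii), the valley after a kernel class $C$ of $\alpha$ is strictly below $\Delta(\beta)$ if and only if $\beta(C)>\alpha(C)$. Write $C_1,\dots,C_k$ for the classes whose valleys form the chosen block, with values $v_1<\cdots<v_k$, and let $C_0$, $C_{k+1}$ be the neighbouring classes with values $v_0$, $v_{k+1}$; the completion sends $C_j\mapsto v_{j+1}$ for $1\le j\le k$ and merges $C_k$ with $C_{k+1}$. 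Then: (a)~since $\beta(C_j)>v_j$, condition (ii) and the absence of values of $\alpha$ in the open interval between $v_j$ and $v_{j+1}$ force $\beta(C_j)\geq v_{j+1}$, so $\alpha'\leq\beta$ and there is no overshoot; (b)~maximality of the block gives $\beta(C_{k+1})=v_{k+1}$ (or $C_{k+1}$ is the last class, with the same conclusion), whence $v_{k+1}\leq\beta(C_k)\leq\beta(C_{k+1})=v_{k+1}$, so the unique new kernel merge $C_k\cup C_{k+1}$ lies in a single fibre of $\beta$ and (iii) persists; (c)~$\mathrm{im}(\alpha')=\mathrm{im}(\alpha)\setminus\{v_1\}$, and leftmostness of the block gives $\beta=\alpha$ on every class to the left of $C_1$, so $v_1\notin\mathrm{im}(\beta)$ and (ii) persists. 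With (a)--(c) supplied, your induction on the enclosed area is complete, and the base case is immediate: if no valley of $\Delta(\alpha)$ is strictly below $\Delta(\beta)$ then $\beta=\alpha$.
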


\begin{proof}
Let $\alpha\in\mathbf{C}_n^+$. Consider the corresponding Dyck path $\Delta(\alpha)$
(schematically shown as the solid path in Figure~\ref{fig6}). Assume that 
$\alpha\prec\beta$ for some $\beta\in \mathbf{C}_n^+$. Observe that the image of
any element $\alpha\gamma\in \mathbf{C}_n^+$ is a subset of the image of $\alpha$.
This means that every ascent of $\Delta(\beta)$ either overlaps with an
ascent of $\Delta(\alpha)$ or belongs to a dotted line as shown in Figure~\ref{fig6}.
Similarly, the kernel partition defined by any element $\gamma\alpha\in \mathbf{C}_n^+$
is coarser than the kernel partition of $\alpha$. 
This means that every descent of $\Delta(\beta)$ either overlaps with a
descent of $\Delta(\alpha)$ or belongs to a dashed line as shown in Figure~\ref{fig6}.
Hence $\Delta(\beta)$ can be obtained from $\Delta(\alpha)$ by a sequence of operations of
rectangular completion as described above in the definition of $\prec'$. In particular,
$\alpha\prec'\beta$.

Let $\alpha\in\mathbf{C}_n^+$. For $i=1,\dots,n-1$ set $\gamma_i:=\overline{\mathfrak{p}}(e_i)$.
Assume that $\Delta(\alpha)$ is given schematically as shown by the solid path in Figure~\ref{fig7}.
It is easy to check that either $\Delta(\gamma_i\alpha)=\Delta(\alpha)$ or $\Delta(\gamma_i\alpha)$
is obtained by replacing the solid part of the path beneath the dashed path on 
Figure~\ref{fig7} with this dashed path. Similarly, either $\Delta(\alpha\gamma_i)=\Delta(\alpha)$ 
or $\Delta(\alpha\gamma_i)$ is obtained by replacing the solid part of the path beneath the dotted 
path on  Figure~\ref{fig7} with this dotted path.

\begin{figure}
\begin{picture}(210.00,60.00)
\drawline(10.00,10.00)(30.00,30.00)
\drawline(40.00,20.00)(30.00,30.00)
\drawline(40.00,20.00)(60.00,40.00)
\drawline(80.00,20.00)(60.00,40.00)
\drawline(80.00,20.00)(110.00,50.00)
\drawline(130.00,30.00)(110.00,50.00)
\drawline(130.00,30.00)(140.00,40.00)
\drawline(160.00,20.00)(140.00,40.00)
\drawline(160.00,20.00)(180.00,40.00)
\drawline(210.00,10.00)(180.00,40.00)
\dottedline{2}(60.00,40.00)(70.00,50.00)
\dottedline{2}(90.00,30.00)(70.00,50.00)
\dashline{2}(150.00,30.00)(170.00,50.00)
\dashline{2}(180.00,40.00)(170.00,50.00)
\end{picture}
\caption{Second part of the proof of Proposition~\ref{prop75}}\label{fig7}
\end{figure}

Now let $\delta$ and $\delta'$ be two Dyck paths such that 
$\delta\leq \delta'$, $\alpha=\Delta^{-1}(\delta)$ and $\beta=\Delta^{-1}(\delta')$.
By an inductive application of the previous paragraph we can find
$\gamma,\gamma'\in \mathbf{C}_n^+$ such that $\beta=\gamma\alpha$ and $\beta=\alpha\gamma'$,
which implies that $\alpha\prec\beta$. As $\prec'$ is a transitive closure of the preimage of
$\leq$ under $\Delta$, it follows that $\alpha\prec'\beta$ implies
$\alpha\prec\beta$, completing the proof.
\end{proof}

As a corollary we can reformulate \cite[Theorem~6]{BBS} as follows:

\begin{corollary}\label{cor76}
A pair $(\delta,\delta')$ of Dyck paths is admissible if and only if 
$\Delta^{-1}(\mathfrak{i}(\delta))\prec \Delta^{-1}(\delta')$ and
$\Delta^{-1}(\mathfrak{i}(\delta'))\prec \Delta^{-1}(\delta)$.
\end{corollary}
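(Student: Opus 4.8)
The plan is to obtain the corollary as a direct translation of \cite[Theorem~6]{BBS} into the algebraic language assembled in this section, so that no new combinatorial content is required beyond the two identifications already established. The notion of admissibility is the common input: a pair $(\delta,\delta')$ is admissible precisely when it equals $\Delta_w$ for some $w\in S_n$, and it is exactly these pairs that \cite[Theorem~6]{BBS} characterizes. What \cite{BBS} provides is a characterization of admissibility by the two symmetric conditions that the first derivative of the Kreweras involution applied to $\delta$ lie below $\delta'$, and symmetrically with $\delta$ and $\delta'$ interchanged, both inequalities being taken with respect to the partial order $\leq$ on $\mathfrak{D}_n$ recalled before Figure~\ref{fig5}. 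Our task is therefore only to rewrite these two conditions using the notation $\mathfrak{i}$ and $\prec$.

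First I would appeal to Subsection~\ref{s4.3}, where it was shown that the map $\mathfrak{i}$ defined by \eqref{eq4} coincides on the whole of $\mathfrak{D}_n$ with the first derivative of the Kreweras involution of \cite{Kr}; this is exactly the derivative used in \cite{BBS}. Hence the path $\mathfrak{i}(\delta)$ appearing in our statement is literally the path occurring in \cite[Theorem~6]{BBS}, and likewise for $\mathfrak{i}(\delta')$, so this ingredient matches verbatim and needs no further argument.

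Next I would invoke Proposition~\ref{prop75}. Since $\Delta\colon\mathbf{C}_n^+\to\mathfrak{D}_n$ is a bijection and $\prec'$ was defined as the preimage under $\Delta$ of the transitive closure of $\leq$, the proposition yields that for any $\epsilon,\epsilon'\in\mathfrak{D}_n$ one has $\epsilon\leq\epsilon'$ (in the transitive closure) if and only if $\Delta^{-1}(\epsilon)\prec\Delta^{-1}(\epsilon')$. Applying this equivalence once with $(\epsilon,\epsilon')=(\mathfrak{i}(\delta),\delta')$ and once with $(\epsilon,\epsilon')=(\mathfrak{i}(\delta'),\delta)$ converts the two inequalities of \cite[Theorem~6]{BBS} into precisely the two conditions $\Delta^{-1}(\mathfrak{i}(\delta))\prec\Delta^{-1}(\delta')$ and $\Delta^{-1}(\mathfrak{i}(\delta'))\prec\Delta^{-1}(\delta)$ asserted in the corollary. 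Combining the two translations gives the stated equivalence.

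The one point demanding care, and the only place a gap could slip in, is the alignment of the orders: I must make sure that the partial order $\leq$ appearing in \cite[Theorem~6]{BBS} is the same relation whose transitive closure defines $\prec'$, and that its direction is correctly oriented so that $\alpha\prec\beta$ corresponds to $\Delta(\alpha)\leq\Delta(\beta)$ rather than the reverse. Because $\prec'$ was built directly from the cover description of $\leq$ reproduced from \cite{BBS}, and Proposition~\ref{prop75} identifies $\prec'$ with $\prec$, this orientation check is routine, and with it the reformulation is complete.
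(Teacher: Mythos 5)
Your proposal is correct and matches the paper's treatment: the corollary is stated there as an immediate reformulation of \cite[Theorem~6]{BBS}, obtained exactly as you do by substituting the identification of $\mathfrak{i}$ with the first derivative of the Kreweras involution (Subsection~\ref{s4.3}) and the identification $\prec\,=\,\prec'$ (Proposition~\ref{prop75}). Your extra check that the orientation of $\leq$ agrees with that of $\prec$ is a sensible precaution but introduces nothing beyond what the paper's construction of $\prec'$ already guarantees.
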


\subsection{Self-dual elements}\label{s4.5}

It turns out that self-dual elements of $\mathcal{DC}_{n}$
also admit a very nice combinatorial interpretation. 
Note that the involution on $\mathcal{DC}_{n}$ is the restriction of the matrix transpose, and 
hence the self-dual elements of $\mathcal{DC}_{n}$ are exactly those given by symmetric matrices.
In particular, this includes the $2^{n-1}$ idempotents. Note that idempotents of $\mathcal{DC}_{n}$
are exactly direct sums of matrices consisting entirely of $1$s.

\begin{proposition}\label{selfdual}
Let $w\in S_n$ be a $4321$-avoiding permutation. Then $\Psi(z_w)^t=\Psi(z_w)$ if and only if $w$ is an involution.
\end{proposition}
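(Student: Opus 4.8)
The plan is to reduce the entire statement to a single structural identity, namely $\Psi(z_w)^t=\Psi(z_{w^{-1}})$ for all $w\in S_n$. To establish this I would use the Bruhat-ideal description of $\Psi$ that is implicit in the proofs of Propositions~\ref{prop1} and~\ref{prop5}. There one has $\Psi=\overline{\Phi}\circ\varphi$, and since $\varphi(z_w)$ is the principal Bruhat ideal $\{u:u\leq w\}$, it follows that $\Psi(z_w)=\bigcup_{u\leq w}\Phi(u)$ as a Boolean matrix. Transposition distributes over unions, and $\Phi(u)^t=\Phi(u^{-1})$ because the transpose of a permutation matrix encodes the inverse permutation. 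Combining this with the inversion-invariance of the Bruhat order ($u\leq w$ iff $u^{-1}\leq w^{-1}$) yields
\begin{displaymath}
\Psi(z_w)^t=\bigcup_{u\leq w}\Phi(u^{-1})=\bigcup_{v\leq w^{-1}}\Phi(v)=\Psi(z_{w^{-1}}),
\end{displaymath}
which is the desired identity. Alternatively one could verify it by computing $\mathrm{max}$ and $\mathrm{min}$ of the transpose and invoking Proposition~\ref{prop6}, but the ideal-theoretic argument is cleaner.

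With this identity in hand the two directions are short. For the ``if'' direction, if $w$ is an involution then $w=w^{-1}$, whence $\Psi(z_w)^t=\Psi(z_{w^{-1}})=\Psi(z_w)$; note this direction does not even use that $w$ is $4321$-avoiding.

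For the ``only if'' direction, suppose $\Psi(z_w)^t=\Psi(z_w)$. By the identity above this reads $\Psi(z_{w^{-1}})=\Psi(z_w)$, so $w$ and $w^{-1}$ lie in the same fiber $\Psi_\alpha$ with $\alpha=\Psi(z_w)$. Here I would record the elementary pattern-avoidance fact that $w$ is $4321$-avoiding if and only if $w^{-1}$ is: a permutation contains a pattern $\sigma$ exactly when its inverse contains $\sigma^{-1}$, and the permutation $4321$ is its own inverse. Thus both $w$ and $w^{-1}$ are $4321$-avoiding elements of $\Psi_\alpha$. Now Proposition~\ref{prop7}\eqref{prop7.1} guarantees that $\Psi_\alpha$ contains a \emph{unique} $4321$-avoiding permutation, forcing $w=w^{-1}$, i.e., $w$ is an involution.

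The only step requiring genuine care is the transpose identity $\Psi(z_w)^t=\Psi(z_{w^{-1}})$; after that, the conclusion is essentially the uniqueness statement handed to us by Proposition~\ref{prop7}\eqref{prop7.1}. I expect no real obstacle beyond making the ideal-theoretic description of $\Psi$ precise and matching the matrix conventions so that $\Phi(u)^t=\Phi(u^{-1})$ holds as stated.
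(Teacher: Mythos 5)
Your proof is correct and follows essentially the same route as the paper: your key identity $\Psi(z_w)^t=\Psi(z_{w^{-1}})$ is exactly the paper's remark (after Proposition~\ref{prop5}) that $\Psi$ respects the canonical involutions — word reversal on $\mathcal H_n$, which sends $z_w\mapsto z_{w^{-1}}$, and transpose on $\mathcal{DC}_n$ — and your ``only if'' direction uses the same two ingredients as the paper, namely that inverses of $4321$-avoiding permutations are $4321$-avoiding and the uniqueness statement of Proposition~\ref{prop7}\eqref{prop7.1}. The only difference is that you prove the involution-compatibility explicitly via the Bruhat-ideal description rather than citing it, which is a fair way to fill in the paper's one-line appeal.
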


\begin{proof}
The ``if'' statement follows directly from the fact that $\Psi$ is a homomorphism of involutive
semigroups.
The ``only if'' statement follows from the same fact and the additional observation that 
the inverse of a $4321$-avoiding permutation is $4321$-avoiding.
\end{proof}

It is well-known (see e.g. \cite{BBS0} and references therein), that $4321$-avoiding involutions 
in $S_n$ are in bijection with Motzkin paths of length $n$
(i.e., all lattice paths from $(0,0)$ to $(n,0)$, with steps $(1,1)$, $(1,-1)$ or $(1,0)$,  
that never go below the $x$-axis). In particular, it follows from 
Proposition~\ref{selfdual} that the number of self-dual elements of $\mathcal{DC}_{n}$ equals the 
$n$-th Motzkin number $M_n$ (sequence A001006 in \cite{integer}).

\section{A presentation of the double Catalan monoids}\label{s5}

Our goal in this section is to give a finite presentation of the double Catalan monoid $\mathcal{DC}_{n}$.  To do this, we will take advantage of a result of Tenner in \cite{T}, generalizing a celebrated result in \cite{BJS}.  To state her result, we need to introduce some notation.  Let $s_{i_{1}}s_{i_{2}}\cdots s_{i_{r}}$ be a reduced decomposition of a permutation $w$.  Then if $m\ge 0$, define the $m$-shift of $w$ to be the permutation with reduced decomposition $s_{i_{1}+m}s_{i_{2}+m}\cdots s_{i_{r}+m}$ (this may be defined in a larger symmetric group).  For example, the permutation $4321$ has reduced decomposition 
\begin{equation}\label{reddec4321}
s_{1}s_{2}s_{3}s_{1}s_{2}s_{1}.
\end{equation}
Its $2$-shift is the permutation with reduced decomposition
\begin{displaymath}
s_{3}s_{4}s_{5}s_{3}s_{4}s_{3},
\end{displaymath}
which is $126543$.

A permutation is called {\em vexillary}  if it is $2143$-avoiding.  
For example, the permutations $321$ and  $4321$ are vexillary.
Tenner established the following characterization of vexillary permutations~\cite[Theorem~3.8]{T}.  

\begin{theorem}\label{tennerthm}
A permutation $w$ is vexillary if and only if, for any permutation $v$ containing $w$ as a pattern, some reduced 
decomposition of $v$ contains an $m$-shift (for some $m\ge 0$) of some reduced decomposition of $w$ as a factor.
\end{theorem}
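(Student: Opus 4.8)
The plan is to recast the whole statement in the language of reduced wiring diagrams (equivalently, reduced pseudoline arrangements or sorting networks) and to isolate the pattern $2143$ as the exact scheduling obstruction. Recall that a reduced decomposition $s_{i_1}\cdots s_{i_r}$ of a permutation $\pi\in S_N$ is the same data as a reduced wiring diagram on $N$ wires, where the wire entering in top position $t$ exits in bottom position $\pi(t)$, every pair of wires crosses at most once, and the letter $s_{i_j}$ records the crossing happening in columns $i_j,i_j+1$ at time $j$. A consecutive factor of the word is precisely a contiguous \emph{time band} of the diagram. Under this dictionary, an occurrence of $w\in S_k$ as a classical pattern in $v$ is a choice of $k$ wires $W$ whose restricted sub-diagram realizes $w$ (restriction preserves reducedness, so this is automatic once the chosen top positions realize the one-line pattern of $w$); and an $m$-shift of a reduced word of $w$ occurring as a factor of a reduced word of $v$ is a contiguous band in some reduced diagram of $v$ during which the wires of $W$ occupy the consecutive columns $m+1,\dots,m+k$, all of their mutual crossings are performed, and no crossing involving an outside wire interleaves. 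To orient the reader I would first record the always-true direction: any factor of a reduced word of $v$ is itself a reduced word of a pattern of $v$ (delete the non-participating wires). The content of the theorem is exactly when the converse \emph{lifting} holds.

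For the forward implication (vexillary $\Rightarrow$ lifting), fix an occurrence $W$ of $w$ in $v$ and aim for a reduced diagram of $v$ with a three-phase shape: first gather the wires of $W$ into consecutive columns in their top order using only crossings that involve at most one wire of $W$; then, in a single contiguous band, perform all mutual crossings of the $W$-wires, which produces a shifted reduced word of $w$; then finish sorting arbitrarily. Since the final permutation $v$ is fixed, the set of crossings is forced, so the real issue is scheduling: an outside wire whose forced crossings with $W$-wires interleave the internal $W$-crossings can obstruct phase one and force a crossing into the band. The key lemma is that when $w$ is $2143$-avoiding, every such outside wire can be swept entirely to one side of the intended band, because the pattern condition on $w$ forces enough monotonicity in each outside wire's required crossings; one then completes the construction by induction, peeling off one outside wire (or one displaced crossing) at a time. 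Carrying this out while keeping the entire diagram reduced is the delicate bookkeeping, and pinning down that $2143$-avoidance is precisely the hypothesis that removes the obstruction (and not merely a sufficient one) is the crux of this direction.

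For the converse I would argue contrapositively: assuming $w$ contains $2143$, exhibit a single $v\supseteq w$ admitting no reduced word with a shifted reduced word of $w$ as a factor. Fix a $2143$-occurrence at positions $a<b<c<d$ of $w$ and \emph{inflate} it by threading in a new wire $o$ whose inversions in the constructed $v$ force it to cross the $(a,b)$ pair only after they cross and the $(c,d)$ pair only before they cross. This installs a robust precedence $(a,b)\text{-crossing}\ \prec\ o\text{-crossing}\ \prec\ (c,d)\text{-crossing}$ in the crossing poset, so in every reduced diagram of $v$ the crossing of $o$ lies strictly between the two internal crossings that any shifted reduced word of $w$ would have to place in a single band; contiguity is destroyed and the factor is forbidden, while $w$ survives as a pattern on the wires $\{a,b,c,d,\dots\}$.

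The main obstacle, which I would isolate as a self-contained lemma, is exactly this universality in the converse: one must show the threaded wire forces interleaving in \emph{every} reduced word of $v$, not just in one, which is where the non-commutation created by $o$ has to be proved unavoidable against all commutation and braid moves. I would first prove it for the single pattern $w=2143$ with a minimal explicit $v$, and then bootstrap to an arbitrary non-vexillary $w$ by restricting attention to the sub-diagram on $a,b,c,d$ together with the threaded wire, so that a band realizing a shifted reduced word of $w$ would in particular realize one for its $2143$-subpattern, contradicting the lemma. I expect the forward-direction sweeping argument and this converse universality lemma to absorb essentially all the work, with the wiring-diagram dictionary doing the rest more or less formally.
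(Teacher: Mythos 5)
First, note that the paper does not prove this statement at all: it is quoted verbatim from Tenner \cite[Theorem~3.8]{T}, so the only meaningful comparison is with Tenner's own proof. Your wiring-diagram dictionary and the overall architecture --- a gather/band/finish scheduling argument for sufficiency, and an inflated witness $v$ for necessity, bootstrapped from the single pattern $2143$ --- are the right shape and broadly parallel the structure of the actual argument. But as written this is a plan, not a proof: the two statements you yourself flag as ``the crux'' and ``the main obstacle'' (the sweeping lemma, that for $2143$-avoiding $w$ every outside wire can be pushed entirely to one side of the intended band while keeping the whole diagram reduced; and the universality lemma, that the threaded wire interleaves in \emph{every} reduced word of the witness $v$) are exactly where the entire content of the theorem lives, and neither is proved nor even reduced to a concrete checkable claim. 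In particular, ``peeling off one outside wire at a time'' needs a genuine induction (say on $\mathfrak{l}(v)-\mathfrak{l}(w)$, via a one-entry-insertion reduction) together with an argument that resolving one outside wire never forces a crossing of a previously resolved wire back into the band; nothing in the sketch rules this out, and it is precisely here that $2143$-avoidance must be used quantitatively.

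The converse also has a concrete unaddressed case. A factor of a reduced word of $v$ that is an $m$-shift of a reduced word of $w$ is a property of the \emph{word} alone: the $k$ wires occupying the window columns during the band need not be the designated occurrence $\{a,b,c,d,\dots\}$ against which $o$ was threaded. If $v$ is built from $w$ by inserting the single wire $o$, a hypothetical band could use $o$ itself as one of its window wires (discarding one of the original wires of $w$), and then your betweenness constraint on $o$'s crossings says nothing; your proposed bootstrap (``restrict to the sub-diagram on $a,b,c,d$ and $o$'') does not cover this. Moreover, the claimed ``robust precedence'' in the crossing poset is not available as stated: for three pairwise-crossing wires the only constraint valid in \emph{all} reduced diagrams is that the crossing of the outermost pair occurs \emph{between} the other two crossings --- a betweenness relation, since both global orders are realized by the two reduced words $s_is_{i+1}s_i$ and $s_{i+1}s_is_{i+1}$ --- so a precedence of the form $(a,b)\text{-crossing} \prec o\text{-crossing} \prec (c,d)\text{-crossing}$ cannot be installed by a naive choice of the position and value of $o$. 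One must instead verify that betweenness constraints alone force an interleaving letter inside every candidate band, for every choice of window wires, which is where the real case analysis lies. Until the two lemmas are proved and the off-occurrence windows are excluded, the proposal does not establish the theorem.
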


For example, the vexillary permutation $321$ has reduced decomposition $s_{1}s_{2}s_{1}$.  It follows from Theorem~\ref{tennerthm} that a permutation is $321$-avoiding if and only if it has no reduced decomposition containing a factor of the form $s_{i}s_{i+1}s_{i}$, a result first proved in~\cite{BJS}.  A reduced decomposition of the vexillary permutation $4321$ is given in \eqref{reddec4321}.  We deduce that a permutation is $4321$-avoiding if and only it has no reduced decomposition containing a factor of the form 
\begin{equation}\label{4321eq}
s_{i}s_{i+1}s_{i+2}s_{i}s_{i+1}s_{i}.
\end{equation}

We are now in a position to provide our presentation for the double Catalan monoid.

\begin{theorem}\label{present}
The monoid $\mathcal {DC}_{n}$ admits a presentation with generating set $f_{i}$, $i\in \mathbf N'$, and defining relations (for all appropriate $i,j$)
\begin{gather}\label{present.1}
f_{i}^{2}=f_{i};\\ \label{present.2}
f_{i}f_{j}=f_{j}f_{i},\quad i\ne j\pm 1;\\ \label{present.3}
f_{i}f_{i+1}f_{i}=f_{i+1}f_{i}f_{i+1}; \\ \label{present.4}
f_{i}f_{i+1}f_{i+2}f_{i+1}f_{i}=f_{i}f_{i+1}f_{i+2}f_{i}f_{i+1}f_{i}.
\end{gather}
\end{theorem}
\begin{proof}
Let $M$ be the monoid with the above presentation.  
Consider the assignment mapping the generators $f_{i}$ to $\varepsilon_{i}$.  We already know that the 
elements $\varepsilon_{i}$ satisfy \eqref{present.1}--\eqref{present.3}.  It remains to check that they satisfy \eqref{present.4}.  Observe that $s_{i}s_{i+1}s_{i+2}s_{i+1}s_{i}$ is a reduced decomposition of the transposition $u=(i,i+3)$, whereas $s_{i}s_{i+1}s_{i+2}s_{i}s_{i+1}s_{i}$ is a reduced decomposition of the permutation $w=(i,i+3)(i+1,i+2)$.  Therefore, $\alpha_{u}=\alpha_{w}$ as both functions send $i,i+1,i+2,i+3$ to $i+3$ and fix all other elements.  Similarly, $\beta_{u}=\beta_{w}$ as both functions send $i,i+1,i+2,i+3$ to $i$ and fix all other elements. Proposition~\ref{prop6} now yields that the $\varepsilon_{i}$ satisfy \eqref{present.4}.  Thus the quotient map  $\mathcal H_{n}\to \mathcal{DC}_{n}$ factors through $M$.

In light of Proposition~\ref{prop7}\eqref{prop7.1}, to prove the theorem, it suffices to prove the following: if $\lambda\colon \mathcal H_{n}\to M$ denotes the projection, then for each $z_{w}\in \mathcal H_{n}$ there is a $4321$-avoiding permutation $u$ with $\lambda(z_u)=\lambda(z_w)$.  We prove this by induction on length, the case $\mathfrak l(w)=0$ being trivial.  If $w$ is $4321$-avoiding, there is nothing to prove.  Otherwise, Theorem~\ref{tennerthm} implies that $w$ has a reduced decomposition containing as a factor  a shift of some reduced decomposition of $4321$.  Applying braid relations, we may assume it contains a factor of the form \eqref{4321eq}.  Application of a relation of the form \eqref{present.4} yields a shorter length permutation $w'$ such that $\lambda(z_{w'})=\lambda(z_w)$.  The claim follows.
\end{proof}

Note that in the presence of relations \eqref{present.2}--\eqref{present.3} one has that \eqref{present.4} is self-dual since the right hand side can be changed to its reversal using braid relations.

\section{Generalization to other Coxeter groups}\label{s6}

\subsection{$0$-Hecke monoid}\label{s6.1}

Let $(W,S)$ be a Coxeter system; so $W$ is a Coxeter group and $S$ is the set of 
simple reflections or Coxeter generators.  The corresponding $0$-Hecke monoid $\mathcal H(W,S)$ is the 
monoid generated by a set of idempotents $e_s$, indexed by $s\in S$, subject to the 
braid relations of $W$. For example, we have 
$\mathcal{H}_n\cong \mathcal{H}(S_n,\{s_1,\dots,s_{n-1}\})$. 
We often write $\mathcal H(W)$ if $S$ is understood.  One calls $|S|$ the {\em rank} of 
$W$  and denotes it by $\mathbf{r}(W)$.

There are realizations of $\mathcal H(W)$ as the monoid of principal Bruhat ideals 
and the monoid generated by foldings along the walls of the fundamental chamber of 
the Coxeter complex $\Sigma(W)$, exactly as in the case of type $A$ Coxeter groups
(see Section~\ref{s2}).  
Also we have the canonical bijection $w\mapsto z_w$ between $W$ and $\mathcal H(W)$, 
as in the case of type $A$. We denote by $\mathfrak{l}$ the length function for $W$.  
We shall frequently use that 
\begin{equation}\label{foldingrule}
e_sz_w = 
\begin{cases}z_{sw}, & \text{if}\,\, \mathfrak{l}(sw)> \mathfrak{l}(w);
\\ z_w, & \text{if}\,\, \mathfrak{l}(sw)< \mathfrak{l}(w);
\end{cases}
\end{equation}
and dually for right multiplication.

If $J\subseteq S$, it will be convenient to denote by $W_{J}$ the corresponding 
parabolic subgroup of $W$ generated by $J$. In the case that $W_J$ is finite, it 
has a longest element, denoted $w_J$, which moreover is an involution.  The 
corresponding element $z_{w_J}$ is an idempotent that we write $e_J$ and all 
idempotents of $\mathcal H(W)$ are of this form.  In particular, when $W$ is finite, 
then there are $2^{\mathbf{r}(W)}$ idempotents in $\mathcal H(W)$.  It is usual to denote the 
longest element of a finite Coxeter group by $w_0$.  Let us therefore denote the 
corresponding idempotent of $\mathcal H(W)$ by $e_0$.  Notice that $e_0$ is the zero 
of $\mathcal H(W)$.  Observe that $e_J\leq e_K$ if and only if $K\subseteq J$, where 
we recall that idempotents in a semigroup are ordered by $e\leq f$ if and only if 
$ef=e=fe$.

If $w\in W$, then the left and right descent sets of $w$ are the respective sets 
\begin{align*}
D_L(w) &= \{s\in S: \mathfrak{l}(sw)< \mathfrak{l}(w)\} = \{s\in S: e_sz_w=z_w\},\\
D_R(w) &= \{s\in S: \mathfrak{l}(ws)< \mathfrak{l}(w)\} = \{s\in S: z_we_s=z_w\}.\\
\end{align*}
Notice that when $W$ is finite, one has that $D_L(w)=J$ if and only if $e_J$ is the 
unique minimal idempotent stabilizing $z_w$ on the left (and similarly for the right).

\subsection{Analogues of the Catalan and Double Catalan monoids}\label{s6.2}
Let $(W,S)$ be a Coxeter system.   Let us set $(s):=S\setminus \{s\}$ for $s\in S$.  We associate analogues of the Catalan monoid to each finite parabolic subgroup $W_J$ of $W$. Let us fix such a finite parabolic for the course of this subsection.

It is well known that each coset $wW_J$ in $W/W_J$ contains a 
unique element $w^J$ of maximum length.  The set of all such longest coset representatives  is denoted $W^J$.  One has that $w\in W^J$ if and only if $D_R(w)\supseteq J$ (see \cite[Corollary~2.4.5]{BB}).  One 
therefore has the following reformulation of this combinatorics in the language of $0$-Hecke monoids.

\begin{proposition}\label{maxlengthreps}
Let $J\subseteq S$.  Then $\mathcal H(W)e_J=\{z_w: w\in W^J\}$.  
More precisely, one has that $z_we_J = z_{w^J}$.
\end{proposition}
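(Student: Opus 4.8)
The plan is to prove the two assertions of Proposition~\ref{maxlengthreps} in a way that leans directly on the folding rule~\eqref{foldingrule} and the characterization of $W^J$ in terms of right descent sets. The key structural input is the stated fact that $w\in W^J$ if and only if $D_R(w)\supseteq J$, together with the observation that $e_J = z_{w_J}$ where $w_J$ is the longest element of the finite parabolic $W_J$.

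First I would establish the ``more precise'' identity $z_we_J = z_{w^J}$, since the set equality then follows almost for free. To compute $z_we_J$, I would write $e_J = z_{w_J}$ and use the fact that $w_J$ has a reduced decomposition $s_{j_1}\cdots s_{j_m}$ in the generators from $J$, so that $e_J = e_{j_1}\cdots e_{j_m}$. Multiplying $z_w$ on the right by these generators one at a time and invoking the right-multiplication version of~\eqref{foldingrule}, each factor either increases the length (recording a genuine multiplication in $W$) or leaves $z_w$ fixed. The claim I want is that the cumulative effect sends $z_w$ to $z_{w^J}$, the longest element of the coset $wW_J$. The cleanest way to see this is to recall that $w^J$ is characterized by $w^J \in wW_J$ together with $D_R(w^J)\supseteq J$; I would argue that $z_we_J$ equals $z_v$ for the unique $v\in wW_J$ with $J\subseteq D_R(v)$, namely $v = w^J$.

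The main technical step is verifying that $z_we_J$ lands on the correct coset representative. I would proceed by noting two things: on the one hand, since $e_J$ is a product of the $e_j$ with $j\in J$, and each right multiplication by $e_j$ either fixes $z_w$ or replaces the underlying permutation by one obtained by right-multiplying by $s_j$ (hence staying in the same coset $wW_J$), we get $z_we_J = z_v$ for some $v\in wW_J$. On the other hand, $z_ve_J = z_we_J^2 = z_we_J = z_v$, and since $e_J$ is the product of the $e_j$, $j\in J$, the equation $z_ve_j = z_v$ holds for every $j\in J$; by the descent-set reformulation $D_R(v) = \{s: z_ve_s=z_v\}$ this says $J\subseteq D_R(v)$, forcing $v = w^J$. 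This pins down $z_we_J = z_{w^J}$.

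Finally, the set equality $\mathcal H(W)e_J = \{z_w: w\in W^J\}$ drops out: every element of $\mathcal H(W)$ is $z_w$ for some $w$, so $\mathcal H(W)e_J = \{z_we_J : w\in W\} = \{z_{w^J}: w\in W\} = \{z_u : u\in W^J\}$, using that every coset has a representative and that $u\in W^J$ satisfies $u^J = u$ (so these elements are actually attained). The step I expect to be the main obstacle is making the inductive folding argument for $z_we_J = z_v$ fully rigorous, in particular confirming that multiplying through a reduced word for $w_J$ never leaves the coset $wW_J$ and that the resulting $v$ is independent of the chosen reduced decomposition—but both of these follow from~\eqref{foldingrule} and the well-definedness of $z_w$, so the argument should be routine once set up carefully.
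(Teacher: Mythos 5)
Your proof is correct and takes essentially the same route as the paper's: the paper likewise notes that writing $z_we_J=z_u$ forces $D_R(u)\supseteq J$ (hence $u\in W^J$), while the dual of the folding rule \eqref{foldingrule} keeps $u$ in the coset $wW_J$, so $u=w^J$ by uniqueness of the longest coset representative. The one step you should tighten is your inference from $z_ve_J=z_v$ to $z_ve_j=z_v$ for each $j\in J$ (a product of idempotents fixing $z_v$ does not formally imply each factor does): the clean justification is that $j\in D_R(w_J)$ for the longest element $w_J$ of $W_J$, so $e_Je_j=e_J$ and therefore $z_ve_j=z_we_Je_j=z_we_J=z_v$ directly.
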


\begin{proof}
Suppose that  $z_we_J=z_u$.  Then clearly $D_R(u)\supseteq J$ and so $u\in W^{J}$.  
On the other hand, from the dual of \eqref{foldingrule} it is immediate that $u\in wW_J$.
\end{proof}

As usual, we view $\mathcal H(W)$ as an ordered monoid where $z_u\leq z_v$ if $u\leq v$ in the Bruhat order.   Since the Bruhat order is compatible with multiplication, one immediately recovers from Proposition~\ref{maxlengthreps} the  well-known fact that the mapping $w\mapsto w^J$ is order preserving (see \cite[Chapter 2, Exercise 16]{BB}). The Bruhat order on the quotient $W/W_J$ is usually defined via the bijection with minimal coset representatives (which are ordered by the Bruhat order), see \cite[2.5]{BB}.  However, in the case that $W_J$ is finite, one can instead use the Bruhat ordering on maximal coset representatives and obtain the same poset structure~\cite[Chapter 2, Exercise 16]{BB}.  Thus as a poset we can identify $W/W_J$ and $\mathcal H(W)e_J$. 

The action of $\mathcal H(W)$ on the left ideal $\mathcal H(W)e_J$ is by order preserving and non-decreasing functions.  This follows immediately from the fact that 
$\mathcal H(W)$ is an ordered monoid in which the identity is minimal.

For example, if $W=S_n$ and $J=(s_{n-1})$, then the corresponding parabolic is $S_{n-1}$ and the maximal coset representatives are the permutations of the form $kn(n-1)\cdots \widehat k\cdots 1$ where $\widehat k$ means omit $k$.  Identifying $k$ with the coset of $kn(n-1)\cdots \widehat k\cdots 1$, we find that the Bruhat ordering is the usual ordering on $\mathbf N$.  The action of $\mathcal H(S_n)$ on $S_{n}/S_{n-1}$ thus factors through the Catalan quotient.    Let us therefore define the {\em generalized Catalan quotient} $\mathcal C(W)_J$ of $\mathcal H(W)$ to be the quotient acting effectively on $\mathcal H(W)e_J\cong W/W_J$ by order preserving and non-decreasing functions.  One has for example that $\mathcal{C}^+_n\cong\mathcal C(S_n)_{(s_{n-1})}$.

To construct analogues of the double Catalan monoids, let us consider the following general situation.  Let $\rho\colon W\to S_n$ be any permutation representation.  Then composing with the standard homomorphism $\Phi\colon S_n\to B_n$ yields a representation $\Phi\rho$ of $W$ by binary relations.  This induces a semiring homomorphism $\mathcal P(W)\to B_n$, which can then be restricted to a homomorphism of ordered monoids $\mathcal H(W)\to B_n$.  The image of a generator $e_s$ is $\Phi\rho(\mathrm{id})+\Phi\rho(s)$. 

Of particular interest is the case when $\rho$ is associated to the action of a finite Coxeter group $W$ on the cosets $W/W_J$.  For example, the double Catalan monoid arises from considering the permutation representation of $S_n$ on $\mathbf N$, which can be identified with the action of $S_n$ on the cosets of its parabolic subgroup $S_{n-1}$.  Hence there is in general a double Catalan quotient $\mathcal D\mathcal C(W)_J$ associated to a finite Coxeter group $W$ and a parabolic subgroup $W_J$ by applying the above construction to the permutation representation associated to the action of $W$ on $W_J$.  With this notation $\mathcal{DC}_{n}=\mathcal{DC}(S_{n})_{(s_{n-1})}$.

There is an alternative, more conceptual, viewpoint on this construction.  We have identified $\mathcal H(W)$ with a submonoid of $\mathcal P(W)$.  But also, $W$ is a subgroup of $\mathcal P(W)$ by identifying elements of $W$ with singleton subsets (in fact $W$ is the group of invertible elements of $\mathcal P(W)$).  Therefore, $\mathcal H(W)$ acts on the left of $\mathcal P(W)$ and $W_J$ acts on the right of $\mathcal P(W)$ by endomorphisms of the additive structure of $\mathcal P(W)$, and these actions commute.  Thus $\mathcal P(W)/W_J= \mathcal P(W/W_J)$ is acted upon by $\mathcal H(W)$ by endomorphisms preserving the additive structure.  This yields a representation of $\mathcal H(W)$ in $B_n$ where $n=[W:W_J]$.  The corresponding effective quotient is $\mathcal D\mathcal C(W)_J$. This construction works
also in the case of infinite $W$.
\vspace{1cm}

\section{Minimal dimension of an effective representation}\label{s7}

\subsection{$0$-Hecke monoid}\label{s7.1}

From now on we assume that $W$, and hence, $\mathcal H(W)$ is finite.
Fix a field $\Bbbk$. Our goal is to compute the minimal degree (dimension) of an effective 
(i.e., injective) linear representation of $\mathcal H(W)$ over $\Bbbk$.  In fact we show 
that there is a unique minimal effective $\mathcal H(W)$-module in the sense that 
it appears as a submodule of every effective module. Note that an effective
representation of a semigroup does not have to give a faithful representation of the
corresponding semigroup algebra.

To provide the intuition for the answer for the monoid $\mathcal{H}(W)$, let us define 
\begin{displaymath}
\mathbf{v}(W)=\sum_{s\in S}[W:W_{(s)}].
\end{displaymath}
Note that $\mathbf{v}(W)$ is the number of vertices of the Coxeter complex 
$\Sigma(W)$. Since $\mathcal H(W)$ acts effectively by type-preserving simplicial maps 
on $\Sigma(W)$, it follows that it acts effectively on the vertex set of 
$\Sigma(W)$.  If $F_{(s)}$ is the vertex of the fundamental chamber with stabilizer $W_{(s)}$, 
then it is easy to see that the opposite vertex $w_0F_{(s)}=e_0F_{(s)}$ is fixed by 
$\mathcal H(W)$.  This fixed element provides a direct summand of the $\Bbbk\mathcal H(W)$-module 
$\Bbbk[W/W_{(s)}]$ isomorphic to the trivial representation.  Killing off this trivial summand 
yields a module of dimension $[W:W_{(s)}]-1$.  The direct sum of these modules over all 
$s\in S$ is an effective $\mathcal H(W)$-module of dimension $\mathbf{v}(W)-\mathbf{r}(V)$.  For example,  we saw that the vertices of $\Sigma(S_{n})$ are the non-empty proper subsets of $\mathbf N$ and so the corresponding representation of $\mathcal H_{n}$ has dimension $2^n-n-1$.

Our main result of this section shows that the module constructed in the previous paragraph is a submodule of 
all other effective modules and hence the minimal dimension of an effective 
$\mathcal H(W)$-module is $\mathbf{v}(W)-\mathbf{r}(V)$. 

The key ingredient of the proof is the following lemma used by Kim and Roush in \cite{KR}, which they attribute to George Bergman.

\begin{lemma}\label{bergmanlemma}
Let $M$ be a monoid and $X\subseteq M$.  Let $L$ be a left ideal of $\Bbbk M$ with simple socle 
and suppose that the socle of $L$ contains a non-zero element of the form $x-y$ with $x,y\in X$.  
Then any $\Bbbk M$-module $V$ that affords a representation whose restriction to $X$ is 
injective contains $L$ as a submodule.
\end{lemma}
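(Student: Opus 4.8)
The plan is to prove this lemma by exhibiting an explicit injective $\Bbbk M$-module homomorphism $L \hookrightarrow V$, using the hypothesis that the socle of $L$ is simple and contains a distinguished element $x-y$ with $x,y \in X$. First I would set up the situation: let $V$ be any $\Bbbk M$-module on which the representation restricts injectively to $X$, meaning the linear maps $v \mapsto xv$ and $v \mapsto yv$ are distinguished by their action (equivalently, the representation separates the elements of $X$). Since $x \neq y$ as elements of $X$ acting on $V$, there must exist some vector $v_0 \in V$ with $xv_0 \neq yv_0$, i.e.\ $(x-y)v_0 \neq 0$. The key idea is then to build the homomorphism $L \to V$ by sending the generator appropriately and extending by the module structure.

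The main step is the following. Because $L$ is a left ideal of $\Bbbk M$ with simple socle $S = \mathrm{soc}(L)$, and $x-y$ is a nonzero element of $S$, the element $x-y$ generates $S$ as a $\Bbbk M$-module (a nonzero element of a simple module generates it). I would define a map $\phi\colon L \to V$ by first defining it on the cyclic submodule generated by $x-y$, exploiting the nonzero vector $v_0 \in V$ found above. Concretely, one wants a homomorphism sending $x-y \mapsto (x-y)v_0$; the obstacle to well-definedness is controlled by the fact that $x-y$ lies in the \emph{simple} socle. I would argue that the assignment $\ell \mapsto \ell v_0$, for $\ell$ in the cyclic module $\Bbbk M(x-y)$, is a well-defined $\Bbbk M$-homomorphism into $V$, and that it is injective precisely because its restriction to the simple socle is nonzero (a nonzero homomorphism out of a simple module, or with simple socle meeting the kernel trivially, must be injective).

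The crucial structural point, and where I expect the real work to lie, is upgrading injectivity on the socle to injectivity on all of $L$. The standard mechanism is that for a module $L$ with \emph{simple} socle, any homomorphism whose kernel intersects the socle trivially is automatically injective: indeed, if the kernel $K$ were nonzero it would contain a simple submodule, hence would meet $\mathrm{soc}(L)$ nontrivially, and since $\mathrm{soc}(L)$ is simple this forces $\mathrm{soc}(L) \subseteq K$, contradicting that $\phi$ is nonzero on the socle (as $\phi(x-y) = (x-y)v_0 \neq 0$). So the heart of the argument is to produce \emph{some} $\Bbbk M$-homomorphism $\phi\colon L \to V$ that is nonzero on the socle; the evaluation-at-$v_0$ map, extended from $\Bbbk M(x-y)$ to all of $L$, is the natural candidate, but one must confirm it extends to a homomorphism defined on all of $L$ rather than merely on the cyclic submodule.

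The cleanest way to handle this extension is to observe that $V$ contains a copy of the socle $S$ via $x-y \mapsto (x-y)v_0$, and then to invoke that the inclusion $S \hookrightarrow V$ extends along $S \hookrightarrow L$ when the relevant injectivity/lifting property holds. The anticipated obstacle is thus precisely this extension problem: ensuring the map defined on the socle (or on the cyclic module it generates) extends to $L$. I would resolve it by using the evaluation map $\Bbbk M \to V$, $m \mapsto mv_0$, which is a $\Bbbk M$-homomorphism of the free module; restricting this to the left ideal $L \subseteq \Bbbk M$ gives a homomorphism $\phi\colon L \to V$ directly, with no extension step needed, and $\phi(x-y) = (x-y)v_0 \neq 0$ by construction. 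Then simplicity of the socle immediately forces $\phi$ to be injective by the kernel argument above, yielding $L$ as a submodule of $V$.
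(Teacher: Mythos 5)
Your proposal is correct and, after some initial deliberation about extension problems, arrives at exactly the paper's argument: pick $v_0$ with $(x-y)v_0\neq 0$ (possible since the representation separates $x$ and $y$), restrict the evaluation map $a\mapsto av_0$ from $\Bbbk M$ to the left ideal $L$, and conclude injectivity because the kernel, being a submodule of a module with simple (essential) socle, would otherwise contain $\Bbbk M(x-y)$, which the map does not annihilate. The detour about extending from the cyclic submodule is unnecessary, as you yourself recognize in the final paragraph, and the rest matches the paper's proof.
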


\begin{proof}
As $x-y$ does not annihilate $V$, there is an element $v\in V$ such that $(x-y)v\neq 0$.  
The module homomorphism $L\to \Bbbk Mv$ given by $a\mapsto av$ must be injective because 
it does not annihilate the simple socle $\Bbbk M(x-y)$ of $L$.
\end{proof}
 
Next we rephrase the effective action of $\mathcal H(W)$ on the vertices of the Coxeter complex in the language of left ideals. 

\begin{corollary}\label{effectiveonsets}
The action of $\mathcal H(W)$ on the left ideal 
\begin{displaymath}
\bigcup_{s\in S} \mathcal H(W)e_{(s)}
\end{displaymath} 
is effective.
\end{corollary}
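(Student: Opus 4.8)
The plan is to reduce the claimed effectiveness to the faithfulness of the action of $\mathcal H(W)$ on the full vertex set of the Coxeter complex, which was already established in Subsection~\ref{s7.1}. Recall that each vertex of $\Sigma(W)$ is a coset of a maximal parabolic $W_{(s)}$, and that the face poset of $\Sigma(W)$ identifies cosets $wW_{(s)}$ with elements of $W/W_{(s)}$. By Proposition~\ref{maxlengthreps}, the left ideal $\mathcal H(W)e_{(s)}$ is exactly $\{z_w : w\in W^{(s)}\}$, and as a poset it may be identified with $W/W_{(s)}$. Thus the disjoint-looking union $\bigcup_{s\in S}\mathcal H(W)e_{(s)}$ is precisely a model, internal to $\mathcal H(W)$, of the vertex set of $\Sigma(W)$ decomposed according to vertex type.

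First I would verify that the displayed set is genuinely a left ideal: it is a union of the principal left ideals $\mathcal H(W)e_{(s)}$, and an arbitrary union of left ideals is again a left ideal, so this is immediate. Next, the central point is that the natural action of $\mathcal H(W)$ on this left ideal (by left multiplication) is, under the identification $\mathcal H(W)e_{(s)}\cong W/W_{(s)}$ of Proposition~\ref{maxlengthreps}, exactly the type-preserving simplicial action of $\mathcal H(W)$ on the vertices of $\Sigma(W)$. Here I would use the folding rule~\eqref{foldingrule} together with Proposition~\ref{maxlengthreps}, which gives $e_s \cdot z_w e_{(t)} = e_s z_{w^{(t)}}$ and shows that left multiplication permutes and identifies the cosets exactly as the foldings $\varphi_s$ act on the vertices of the Coxeter complex described in Subsection~\ref{s2.3}.

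The conclusion then follows from the effectiveness already noted in the text: since $\mathcal H(W)$ acts effectively by type-preserving simplicial maps on $\Sigma(W)$, and type-preserving maps are determined by their action on vertices, $\mathcal H(W)$ acts effectively on the vertex set. Transporting this across the identification yields that distinct elements $a,b\in\mathcal H(W)$ induce distinct transformations of $\bigcup_{s\in S}\mathcal H(W)e_{(s)}$, which is precisely the assertion of the corollary. I expect the only real obstacle to be bookkeeping: one must check carefully that the identification of $\mathcal H(W)e_{(s)}$ with $W/W_{(s)}$ is equivariant for the two actions (left multiplication in $\mathcal H(W)$ versus the folding action on vertices), so that effectiveness transfers cleanly. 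Once that equivariance is pinned down via~\eqref{foldingrule} and Proposition~\ref{maxlengthreps}, the statement is a direct corollary rather than an independent argument.
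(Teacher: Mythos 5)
Your proof is correct in outline, but it takes a genuinely different route from the paper's. The paper's proof is a two-line algebraic argument that never touches the Coxeter complex: since only the identity of $W$ lies in every maximal parabolic subgroup, one has $v=w$ if and only if $vW_{(s)}=wW_{(s)}$ for all $s\in S$, and Proposition~\ref{maxlengthreps} (in the form $z_ve_{(s)}=z_{v^{(s)}}$) translates this directly into: $z_v=z_w$ if and only if $z_ve_{(s)}=z_we_{(s)}$ for all $s\in S$. So distinct elements of $\mathcal H(W)$ are separated already by their action on the finitely many idempotents $e_{(s)}$ themselves --- slightly stronger than effectiveness on the whole ideal --- with no equivariance bookkeeping at all. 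Your route instead takes literally the sentence preceding the corollary (``we rephrase the effective action \dots on the vertices''): you identify $\bigcup_{s\in S}\mathcal H(W)e_{(s)}$ with the vertex set of $\Sigma(W)$ and transport effectiveness across that identification. Two cautions if you carry this out. First, the effectiveness of $\mathcal H(W)$ on the vertex set is merely \emph{asserted} at the start of Subsection~\ref{s7.1}; the corollary and its proof are arguably the paper's substantiation of that assertion, so to avoid circularity you must justify it independently --- which you can, since by Subsection~\ref{s2.3} the action on chambers is the regular action of $\mathcal H(W)$ on itself (hence faithful, as $a\cdot 1=a$), and a type-preserving simplicial endomorphism is determined by its values on vertices. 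Second, the equivariance check is as you describe but needs one more ingredient than \eqref{foldingrule} and Proposition~\ref{maxlengthreps} alone: after computing $e_t(z_we_{(s)})=z_{tw}e_{(s)}$ or $z_we_{(s)}$, you should invoke Proposition~\ref{maxcosetmoves} (or well-definedness of the induced action on $W/W_{(s)}$) to see that the result is again the maximal representative of the expected coset, matching the folding $\varphi_t$ on vertices. Both points are fillable, so your argument works; what the paper's approach buys is brevity and self-containedness, while yours buys the geometric picture that motivates the statement in the first place.
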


\begin{proof}
First note that, for $v,w\in W$, one has $v=w$ if and only if $vW_{(s)}=wW_{(s)}$ for all 
$s\in S$ because only the identity belongs to every maximal parabolic subgroup.  
Proposition~\ref{maxlengthreps} thus yields $z_{v}=z_{w}$ if and only if 
$z_{v}e_{(s)}=z_{w}e_{(s)}$ for all $s\in S$.  This establishes the corollary.
\end{proof}

The following proposition is elementary.

\begin{proposition}\label{maxcosetmoves}
Let $J\subseteq S$.
If $w\in W^{J}$ and $s\in S$ with $swW_{J}\ne wW_{J}$, then $sw\in W^{J}$.
\end{proposition}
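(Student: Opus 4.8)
The plan is to translate everything into the language of the $0$-Hecke monoid and the folding rule \eqref{foldingrule}. Recall that $v\in W^J$ is equivalent to $J\subseteq D_R(v)$, i.e.\ to $z_ve_t=z_v$ for every $t\in J$, which by Proposition~\ref{maxlengthreps} is the same as saying $z_ve_J=z_v$, i.e.\ $v^J=v$. So the goal becomes: under the hypotheses, $z_{sw}e_J=z_{sw}$, knowing that $z_we_J=z_w$ since $w\in W^J$. First I would split according to the folding rule, that is, according to whether $\mathfrak{l}(sw)>\mathfrak{l}(w)$ or $\mathfrak{l}(sw)<\mathfrak{l}(w)$ (equality is impossible).

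In the ascending case $\mathfrak{l}(sw)>\mathfrak{l}(w)$, the folding rule gives $z_{sw}=e_sz_w$, and then for each $t\in J$ one computes $z_{sw}e_t=e_sz_we_t=e_sz_w=z_{sw}$, using $z_we_t=z_w$. Hence $J\subseteq D_R(sw)$ and $sw\in W^J$. Notice that in this case the coset hypothesis is not even needed; it holds automatically, since $w$ is already the longest element of $wW_J$ and so cannot be beaten in length by another member of the same coset. The point of the statement therefore lies entirely in the descending case.

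In the descending case $\mathfrak{l}(sw)<\mathfrak{l}(w)$, set $u:=sw$, so $\mathfrak{l}(su)=\mathfrak{l}(w)>\mathfrak{l}(u)$ and the folding rule yields $e_sz_u=z_w$. Combining this with $z_we_J=z_w$ and Proposition~\ref{maxlengthreps} (which gives $z_ue_J=z_{u^J}$) I would compute $e_sz_{u^J}=(e_sz_u)e_J=z_we_J=z_w$. Now apply the folding rule to $e_sz_{u^J}$: it equals either $z_{u^J}$ or $z_{su^J}$. In the first subcase $z_{u^J}=z_w$ forces $u^J=w$; but $u^J\in uW_J=swW_J$ while $w\in wW_J$, and these cosets are distinct by hypothesis, a contradiction. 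Hence the second subcase holds, giving $z_{su^J}=z_w$, so $su^J=w$ and therefore $u^J=sw=u$, i.e.\ $sw\in W^J$. The one genuinely decisive step — and the main obstacle — is ruling out the first subcase: this is exactly where the assumption $swW_J\ne wW_J$ is consumed, and everything else is a formal manipulation of the folding rule and Proposition~\ref{maxlengthreps}.
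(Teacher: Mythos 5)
Your proof is correct, and in the case that actually matters it takes a different route from the paper's. The ascending case $\mathfrak{l}(sw)>\mathfrak{l}(w)$ is handled the same way in both (via $z_{sw}=e_sz_w$ and Proposition~\ref{maxlengthreps}; your observation that the coset hypothesis is automatic here is also accurate, since $w$ is already longest in $wW_J$). For the descending case, the paper argues order-theoretically: from $sw<w$ in the Bruhat order and the order-preservation of $v\mapsto v^J$ (the remark after Proposition~\ref{maxlengthreps}, resting on the compatibility of the Bruhat order with multiplication) it gets $(sw)^J\le w$, strict by the coset hypothesis, and then the length squeeze $\mathfrak{l}(w)-1=\mathfrak{l}(sw)\leq \mathfrak{l}((sw)^{J})<\mathfrak{l}(w)$ together with uniqueness of the longest coset representative forces $sw=(sw)^J$. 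You instead stay entirely inside the monoid: the associativity computation $e_sz_{u^J}=(e_sz_u)e_J=z_we_J=z_w$, the folding-rule dichotomy for $e_sz_{u^J}$, and the bijection $v\mapsto z_v$ do all the work, with the hypothesis $swW_J\neq wW_J$ consumed exactly in ruling out the branch $z_{u^J}=z_w$. Your version buys independence from the Bruhat order and its monotonicity properties (it needs only \eqref{foldingrule} and $z_ue_J=z_{u^J}$), and it isolates very cleanly where the coset hypothesis enters; the paper's version is shorter given the order-theoretic apparatus it has already installed, and the length-counting argument generalizes the useful heuristic that projection to $W^J$ can drop length by at most the amount the original element dropped. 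Both are complete proofs; yours is a legitimate, somewhat more self-contained alternative.
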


\begin{proof}
Clearly, if $\mathfrak{l}(sw)>\mathfrak{l}(w)$ with $w\in  W^{J}$, then 
$z_{sw}=e_{s}z_{w}\in \mathcal H(W)e_{J}$ and so Proposition~\ref{maxlengthreps} implies 
$sw\in W^{J}$.  Conversely, suppose that $\mathfrak{l}(sw)<\mathfrak{l}(w)$.  Then $sw< w$ 
in the Bruhat order.  Thus $(sw)^{J}\le w^J=w$ by the remark after Proposition~\ref{maxlengthreps}.  
The inequality is in fact strict by the hypothesis.  Thus we have 
$\mathfrak{l}(w)-1=\mathfrak{l}(sw)\leq \mathfrak{l}((sw)^{J})< \mathfrak{l}(w)$ and so $sw=(sw)^{J}$.
\end{proof}

Recall that in a finite Coxeter group one has $w_0Sw_0=S$ (it is easy to see that 
$\mathfrak{l}(w_0sw_0)=1$ for any $s\in S$).

\begin{proposition}\label{maxcoset}
Let $s\in S$.  Then $w_0s$ is the unique element of $W^{(s)}$ covered by $w_0$ in the 
Bruhat order.  Moreover, $D_L(w_0s) = (w_0sw_0)$. 
\end{proposition}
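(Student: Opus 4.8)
The plan is to first describe all elements of $W$ covered by $w_0$, then intersect that set with $W^{(s)}$, and finally compute the left descent set by conjugating simple reflections through $w_0$. Throughout I will use the standard length-complementation property of the longest element, $\mathfrak{l}(w_0 v)=\mathfrak{l}(w_0)-\mathfrak{l}(v)$ for all $v\in W$ (see \cite{BB}), together with the identity $w_0 S w_0=S$ recalled above and the characterization $w\in W^{(s)}$ iff $D_R(w)\supseteq (s)$.

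First I would identify the elements of $W$ covered by $w_0$. Since $w_0$ is the Bruhat maximum, it covers exactly the elements of length $\mathfrak{l}(w_0)-1$, and by the length formula these are precisely the $w_0 s'$ with $s'\in S$ (all distinct). To decide which lie in $W^{(s)}$ I compute $D_R(w_0 s')$: one has $s'\notin D_R(w_0 s')$ since $w_0 s' s'=w_0$ is longer, while for $s''\ne s'$ the length formula gives $\mathfrak{l}(w_0 s' s'')=\mathfrak{l}(w_0)-\mathfrak{l}(s's'')=\mathfrak{l}(w_0)-2<\mathfrak{l}(w_0 s')$, so $s''\in D_R(w_0 s')$. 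Hence $D_R(w_0 s')=(s')$, and $w_0 s'\in W^{(s)}$, i.e.\ $(s')\supseteq (s)$, if and only if $s'=s$. Thus $w_0 s$ is the unique element of $W^{(s)}$ of length $\mathfrak{l}(w_0)-1$; as $w_0 s\lessdot w_0$ in $W$ and both lie in $W^{(s)}$, it is a cover of $w_0$ in $W^{(s)}$. If ``covered'' is read in the ambient Bruhat order, uniqueness is already complete.

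For uniqueness of the cover inside the quotient poset $W^{(s)}\cong W/W_{(s)}$ --- the point requiring care, since a priori $w_0$ might cover shorter elements of $W^{(s)}$ --- I would show that $w_0 s$ is the unique coatom of $W^{(s)}$. The order-reversing bijection $w\mapsto w_0 w$ satisfies $\mathfrak{l}(w_0 w)=\mathfrak{l}(w_0)-\mathfrak{l}(w)$ and $D_R(w_0 w)=S\setminus D_R(w)$, so it carries $W^{(s)}=\{w:D_R(w)\supseteq (s)\}$ bijectively onto the set $\{v:D_R(v)\subseteq\{s\}\}$ of minimal length coset representatives of $W/W_{(s)}$, sending $w_0$ to $\mathrm{id}$. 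Coatoms of $W^{(s)}$ therefore correspond to atoms of this minimal-representative poset. An atom has length one: if $v$ is a minimal representative with $\mathfrak{l}(v)\ge 2$, then $v\notin W_{(s)}$, so the unique generator $s\notin W_{(s)}$ satisfies $s\le v$ and is itself a minimal representative with $\mathrm{id}<s<v$, whence $v$ is not an atom. Since the only length-one minimal representative is $s$, the unique atom is $s$, and consequently $w_0 s$ is the unique coatom of $W^{(s)}$; this is exactly the uniqueness assertion.

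Finally, for the equality $D_L(w_0 s)=(w_0 s w_0)$, I would push simple reflections through $w_0$. For $s''\in S$ write $s'' w_0=w_0\,\overline{s''}$ with $\overline{s''}:=w_0 s'' w_0\in S$, using $w_0 S w_0=S$. Then $\mathfrak{l}(s'' w_0 s)=\mathfrak{l}(w_0\,\overline{s''}\,s)=\mathfrak{l}(w_0)-\mathfrak{l}(\overline{s''}\,s)$, which equals $\mathfrak{l}(w_0)-2$ when $\overline{s''}\ne s$ and $\mathfrak{l}(w_0)$ when $\overline{s''}=s$. Comparing with $\mathfrak{l}(w_0 s)=\mathfrak{l}(w_0)-1$ yields $s''\in D_L(w_0 s)$ exactly when $\overline{s''}\ne s$, i.e.\ when $s''\ne w_0 s w_0$. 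Hence $D_L(w_0 s)=S\setminus\{w_0 s w_0\}=(w_0 s w_0)$, completing the proof. The only genuinely subtle step is the uniqueness of the cover in the quotient poset; everything else reduces to a direct application of the length-complementation formula.
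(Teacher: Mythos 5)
Your proof is correct, and its core follows the same skeleton as the paper's: the elements covered by $w_0$ in the Bruhat order on $W$ are exactly the $w_0t$ with $t\in S$, and one then checks that only $t=s$ yields an element of $W^{(s)}$. The paper does this second step more economically, observing that $w_0tW_{(s)}=w_0W_{(s)}$ for $t\ne s$ (so $w_0t$ is not the longest element of its coset), whereas you compute $D_R(w_0t)=(t)$ and invoke the characterization $w\in W^{(s)}$ if and only if $D_R(w)\supseteq (s)$ from Proposition~\ref{maxlengthreps} --- the same fact in different clothing. For the descent-set claim, the paper notes that any ascent $tw_0s$ with $\mathfrak{l}(tw_0s)>\mathfrak{l}(w_0s)$ has length $\mathfrak{l}(w_0)$ and hence equals $w_0$, forcing $t=w_0sw_0$, while you push $t$ through $w_0$ via $tw_0=w_0(w_0tw_0)$ and apply length complementation; both are one-line arguments and equally valid. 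The genuinely different content is your middle section: the unique-coatom analysis of $W^{(s)}\cong W/W_{(s)}$ via the order-reversing bijection $w\mapsto w_0w$ onto minimal coset representatives, with the subword-property argument that $s$ is the unique atom on the minimal-representative side. This is correct but goes beyond the statement: the paper reads ``covered by $w_0$'' in the ambient Bruhat order (and this ambient reading is what is actually used later, in the proof of Theorem~\ref{simplesocle}), so your first paragraph already finishes the job. What your extra work buys is a strictly stronger conclusion --- $w_0s$ is the unique coatom of the quotient poset, not merely the unique ambient cover of $w_0$ lying in $W^{(s)}$ --- at the cost of several standard facts (length and descent complementation, the identification of covers under an anti-isomorphism) that the paper's three-line proof avoids entirely.
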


\begin{proof}
The elements covered by $w_0$ in the Bruhat order are the $w_0t$ with $t\in S$.  
But if $t\neq s$, then $w_0W_{(s)}=w_0tW_{(s)}$ yielding the first statement.  
For the second, if $\mathfrak{l}(tw_0s)>  \mathfrak{l}(w_0s)$,
$t\in S$, then $tw_0s=w_0$ and so $t=w_0sw_0$.
\end{proof}

Let us put $P_{(s)} :=\Bbbk\mathcal H(W)e_{(s)}$.  This is a projective $\Bbbk \mathcal H(W)$-module.  
It contains the trivial submodule $\Bbbk e_0$ as a direct summand and we have
\begin{equation}\label{directsumdecomp}
P_{(s)}= \Bbbk\mathcal H(W)(e_{(s)}-e_0)\oplus \Bbbk e_0.
\end{equation}
As the idempotent $e_{(s)}$ is $0$-minimal, the idempotent $e_{(s)}-e_0$ is primitive.
Set $P'_{(s)}:=\Bbbk \mathcal H(W)(e_{(s)}-e_0)$; it is a projective indecomposable module of 
dimension $[W:W_{(s)}] -1$.

The irreducible representations of $\mathcal H(W)$ are well known.  For each 
$J\subseteq S$ there is an irreducible representation $\theta_J\colon \mathcal H(W)\to 
\mathrm{End}_{\Bbbk}(\Bbbk)$ given by 
\begin{displaymath}
\theta_J(z_w) = 
\begin{cases}
\mathrm{id}, & \text{if}\,\, w\in W_J;\\ 0, & \text{else};
\end{cases} 
\end{displaymath}
and these are all the irreducible representations (see \cite{No,Ca} for details).  
In particular, the irreducible representations do not help to find an effective representation.

The main technical result of this section is the following theorem.

\begin{theorem}\label{simplesocle}
The module $P'_{(s)}$ has simple socle $\Bbbk (z_{w_{0}s}-e_{0})$
isomorphic to $\theta_{(w_0sw_0)}$.
\end{theorem}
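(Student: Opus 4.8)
The plan is to work throughout with the concrete model of $P_{(s)}$ furnished by Proposition~\ref{maxlengthreps}: it has $\Bbbk$-basis $\{z_u : u\in W^{(s)}\}$, and by the folding rule \eqref{foldingrule} a generator $e_{s'}$ sends $z_u$ to $z_{s'u}$ when $\mathfrak{l}(s'u)>\mathfrak{l}(u)$ (in which case $s'u$ again lies in $W^{(s)}$, since $\mathcal H(W)e_{(s)}$ is a left ideal and $z_{s'u}=e_{s'}z_u$ lies in it) and fixes $z_u$ when $\mathfrak{l}(s'u)<\mathfrak{l}(u)$. First I would verify that $v_0:=z_{w_0s}-e_0$ spans a simple submodule isomorphic to $\theta_{(w_0sw_0)}$. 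Since $e_0$ is the zero of $\mathcal H(W)$ one has $e_{s'}e_0=e_0$ for every $s'$; and $e_{s'}z_{w_0s}$ equals $z_{w_0s}$ when $s'\in D_L(w_0s)$, while if $s'\notin D_L(w_0s)$ then $\mathfrak{l}(s'w_0s)=\mathfrak{l}(w_0)$ forces $s'w_0s=w_0$, so $e_{s'}z_{w_0s}=e_0$. Hence $e_{s'}v_0=v_0$ for $s'\in D_L(w_0s)$ and $e_{s'}v_0=0$ otherwise, and Proposition~\ref{maxcoset} identifies $D_L(w_0s)$ with $(w_0sw_0)$, giving the desired isomorphism type.

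The substance of the proof is to show that $\Bbbk v_0$ exhausts the socle. Because every irreducible $\mathcal H(W)$-module is one-dimensional of the form $\theta_J$, it suffices to classify the one-dimensional submodules $\Bbbk v$ of $P_{(s)}$. Writing $v=\sum_u c_u z_u$ and assuming $\Bbbk v\cong\theta_J$, a coefficient comparison in the identity $e_{s'}v=v$ for $s'\in J$ shows that the support of $v$ lies in $\{y: J\subseteq D_L(y)\}$. I would then run a \emph{raising argument}: if $y^*$ is a Bruhat-maximal support element and $y^*\ne w_0$, choose $s'\notin D_L(y^*)$; this $s'$ cannot lie in $J$, so $e_{s'}v=0$, and reading off the coefficient of $z_{s'y^*}$ (an honest up-move, so $s'y^*\in W^{(s)}$ and $s'y^*>y^*$) yields $c_{s'y^*}=-c_{y^*}\ne 0$, contradicting maximality. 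Thus the support has unique Bruhat maximum $w_0$, and in particular $c_{w_0}\ne 0$.

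To finish I would determine $J$ and the remaining support. If $J=S$ the support is $\{w_0\}$ and $v\in\Bbbk e_0$. Otherwise, for any $s'\notin J$, examining the coefficient of $z_{w_0}=e_0$ in $e_{s'}v=0$ and invoking Proposition~\ref{maxcoset}, which says that $w_0s$ is the \emph{only} element of $W^{(s)}$ covered by $w_0$, forces $s'=w_0sw_0$; hence $J=(w_0sw_0)$ and $c_{w_0s}=-c_{w_0}$. Setting $t_0:=w_0sw_0$, a second application of the same coefficient analysis shows that any support element $y_0\ne w_0$ satisfies $t_0y_0\in\mathrm{supp}(v)$ with $D_L(t_0y_0)=S$, forcing $t_0y_0=w_0$ and $y_0=w_0s$. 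Therefore $\mathrm{supp}(v)=\{w_0,w_0s\}$ and $v\in\Bbbk v_0$. Consequently the only simple submodules of $P_{(s)}$ are $\Bbbk e_0$ and $\Bbbk v_0$; since by \eqref{directsumdecomp} the summand $P'_{(s)}$ meets $\Bbbk e_0$ trivially while containing $v_0$, one concludes $\mathrm{soc}(P'_{(s)})=\Bbbk v_0\cong\theta_{(w_0sw_0)}$. The main obstacle throughout is the bookkeeping of when a reflection $s'$ carries a maximal coset representative to another one rather than collapsing its coset—this is precisely what governs which basis vectors appear in $e_{s'}v$—and it is exactly this point that Propositions~\ref{maxcosetmoves} and~\ref{maxcoset} are designed to control.
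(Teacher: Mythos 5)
Your argument is correct and takes essentially the same route as the paper: your opening verification that $\Bbbk(z_{w_0s}-e_0)$ affords $\theta_{(w_0sw_0)}$ is Lemma~\ref{insocle}, your coefficient extractions in $e_{s'}v=v$ and $e_{s'}v=0$ are precisely the eigenspace analysis of Lemma~\ref{eigenspaces}, your raising argument on a Bruhat-maximal support element is the paper's maximality argument on $W_{S\setminus J}$-orbits in support-level language, and Propositions~\ref{maxcosetmoves} and~\ref{maxcoset} are invoked at the same junctures to force $J=(w_0sw_0)$ and $\mathrm{supp}(v)=\{w_0,w_0s\}$. The only unproved assertion, that $v_0=z_{w_0s}-e_0$ lies in $P'_{(s)}$, is a one-liner ($z_{w_0s}e_{(s)}=z_{w_0s}$ and $z_{w_0s}e_0=e_0$ give $z_{w_0s}(e_{(s)}-e_0)=v_0$) and also follows from your own socle computation, so this is not a gap.
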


The proof of this theorem proceeds via several lemmas.  

\begin{lemma}\label{insocle}
The span of the vector $z_{w_0s}-e_0$ is isomorphic to $\theta_{(w_0sw_0)}$.
\end{lemma}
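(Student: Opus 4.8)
The plan is to compute the action of the generators $e_t$, $t\in S$, on the vector $z_{w_0s}-e_0$ and show it matches the formula for $\theta_{(w_0sw_0)}$. First I would set $J := (w_0sw_0)$ and recall that by Proposition~\ref{maxcoset} we have $D_L(w_0s)=J$; this means $e_t z_{w_0s}=z_{w_0s}$ exactly when $t\in J$, by the descent-set characterization $D_L(w)=\{t\in S:e_tz_w=z_w\}$. I would also use that $e_0$ is the zero of $\mathcal H(W)$, so $e_t e_0 = e_0$ for every $t$.

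The key computation is then to evaluate $e_t(z_{w_0s}-e_0)$ for each $t\in S$. When $t\in J$, we have $e_t z_{w_0s}=z_{w_0s}$ and $e_t e_0=e_0$, so $e_t(z_{w_0s}-e_0)=z_{w_0s}-e_0$, i.e.\ $t$ acts as the identity. When $t\notin J$, we have $\mathfrak{l}(tw_0s)>\mathfrak{l}(w_0s)$, so the folding rule \eqref{foldingrule} gives $e_t z_{w_0s}=z_{tw_0s}$. The crucial point is that $z_{tw_0s}=e_0$: indeed by Proposition~\ref{maxcoset} the only element of $W^{(s)}$ strictly above $w_0s$ is $w_0$ itself, and since $tw_0s\in W^{(s)}$ (because $\mathfrak{l}(tw_0s)>\mathfrak{l}(w_0s)$ and applying Proposition~\ref{maxcosetmoves}) with $tw_0s>w_0s$, we must have $tw_0s=w_0$, whence $z_{tw_0s}=z_{w_0}=e_0$. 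Therefore $e_t(z_{w_0s}-e_0)=e_0-e_0=0$, i.e.\ $t$ acts as zero. Comparing with the definition of $\theta_{J}$ — where $z_w$ acts as the identity precisely when $w\in W_J$, and $W_J$ is generated by $J$ — this matches, so the span of $z_{w_0s}-e_0$ carries the representation $\theta_{(w_0sw_0)}$.

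I would conclude by verifying two bookkeeping items. First, $z_{w_0s}-e_0$ is nonzero since $w_0s\neq w_0$ and the map $w\mapsto z_w$ is a bijection. Second, the line $\Bbbk(z_{w_0s}-e_0)$ is genuinely a submodule of $P'_{(s)}=\Bbbk\mathcal H(W)(e_{(s)}-e_0)$: one checks $z_{w_0s}-e_0$ lies in $P'_{(s)}$, which follows because $z_{w_0s}=z_{w_0}e_{(s)}\cdot(\text{something})$ lands in $\mathcal H(W)e_{(s)}$ and, under the decomposition \eqref{directsumdecomp}, its image in the trivial summand $\Bbbk e_0$ is accounted for by subtracting $e_0$.

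The main obstacle I expect is the identity $tw_0s=w_0$ for $t\notin J$; this is where the specific geometry of $w_0$ and the maximal coset representatives enters, and it relies on combining the covering statement and the descent-set statement of Proposition~\ref{maxcoset} with Proposition~\ref{maxcosetmoves} to pin down $tw_0s$ exactly, rather than merely bounding its length. Once that identity is secured, the rest is a direct matching of the action against the definition of $\theta_J$.
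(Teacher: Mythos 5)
Your proof is correct and takes essentially the same route as the paper, whose entire proof is the computation that $e_t(z_{w_0s}-e_0)$ equals $z_{w_0s}-e_0$ for $t\neq w_0sw_0$ and $0$ for $t=w_0sw_0$, deduced from Proposition~\ref{maxcoset}. Your only detour is invoking Proposition~\ref{maxcosetmoves} and the covering structure of $W^{(s)}$ to get $tw_0s=w_0$, which is unnecessary (though harmless): for $t\notin(w_0sw_0)$ one has $t=w_0sw_0$, whence $tw_0s=w_0sw_0\cdot w_0s=w_0$ directly.
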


\begin{proof}
Proposition~\ref{maxcoset} implies that for $t\in S$, one has 
\begin{displaymath}
e_t(z_{w_0s}-e_0) =  \begin{cases}
z_{w_0s}-e_0, & \text{if}\ t\neq w_0sw_0; \\ 0,      & \text{if}\ t=w_0sw_0;
\end{cases}
\end{displaymath}
as required.
\end{proof}

By the direct sum decomposition \eqref{directsumdecomp}, it suffices to prove that 
the socle of $P_{(s)}$ is $\Bbbk(z_{w_0s}-e_0)\oplus \Bbbk e_0$.  To do this we need to 
perform a detailed analysis of the eigenspaces of the elements $e_t$, $t\in S$.  For an 
element 
\begin{displaymath}
v=\sum_{w\in W^{(s)}}c_wz_w\in P_{(s)},
\end{displaymath}
let $\mathrm{supp}(v)$ be 
the {\em support} of $v$ (i.e., the set of $w$ with $c_w\neq 0$).

\begin{lemma}\label{eigenspaces}
Let $t\in T$ and $v\in P_{(s)}$.  Then:
\begin{enumerate}[$($i$)$]
\item\label{eigenspaces.1} $e_tv=v$ if and only if $t\in D_L(w)$ for all $w\in \mathrm{supp}(v)$;
\item\label{eigenspaces.2} $e_tv=0$ if and only if the following two conditions are satisfied:
\begin{enumerate}[$($a$)$]
\item\label{eigenspaces.2.1} $\{wW_{(s)}: w\in \mathrm{supp}(v)\}$ is a union of 
two-element orbits of $t$;
\item\label{eigenspaces.2.2} if $\{wW_{(s)},twW_{(s)}\}$ is a two-element 
orbit, then $c_{w^{(s)}}=-c_{(tw)^{(s)}}$.
\end{enumerate}
\end{enumerate}
\end{lemma}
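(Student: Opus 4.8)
The plan is to diagonalize the action of $e_t$ directly on the natural basis of $P_{(s)}$ and then simply read off the $1$- and $0$-eigenspaces. By Proposition~\ref{maxlengthreps} the module $P_{(s)}=\Bbbk\mathcal H(W)e_{(s)}$ has $\{z_w : w\in W^{(s)}\}$ as a basis, so I would index this basis by the cosets $C\in W/W_{(s)}$, writing $\bar C\in W^{(s)}$ for the (unique) maximal-length representative of $C$. The left multiplication action of $t$ on $W/W_{(s)}$ has orbits of size $1$ or $2$, since $t^2=\mathrm{id}$, and $e_t$ respects the corresponding block decomposition of $P_{(s)}$; the whole computation is then carried out one orbit at a time using the folding rule \eqref{foldingrule}.

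First I would record the action of $e_t$ on a basis vector $z_{\bar C}$. If $tC=C$, then $t\bar C\in C$, and since $\bar C$ is the longest element of its coset one has $\mathfrak{l}(t\bar C)<\mathfrak{l}(\bar C)$, so $t\in D_L(\bar C)$ and $e_tz_{\bar C}=z_{\bar C}$ by \eqref{foldingrule}. If $tC\ne C$, then Proposition~\ref{maxcosetmoves} gives $t\bar C\in W^{(s)}$, and as $t\bar C\in tC$ this forces $\overline{tC}=t\bar C$; thus the two maximal representatives of the orbit $\{C,tC\}$ differ by left multiplication by $t$. Exactly one of them, say $\overline{tC}$ after possibly interchanging $C$ and $tC$, is longer, so $t\in D_L(\overline{tC})$ while $t\notin D_L(\bar C)$, and the folding rule yields
\begin{displaymath}
e_tz_{\bar C}=z_{\overline{tC}},\qquad e_tz_{\overline{tC}}=z_{\overline{tC}}.
\end{displaymath}
Hence on the two-dimensional block $\Bbbk z_{\bar C}\oplus\Bbbk z_{\overline{tC}}$ the operator $e_t$ has $1$-eigenspace $\Bbbk z_{\overline{tC}}$ and $0$-eigenspace $\Bbbk(z_{\bar C}-z_{\overline{tC}})$, while on each fixed block $\Bbbk z_{\bar C}$ it is the identity.

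With this block decomposition both assertions reduce to reading off eigenspaces. For \eqref{eigenspaces.1}, writing $v=\sum_{w\in W^{(s)}}c_wz_w$, the equation $e_tv=v$ holds precisely when the component of $v$ in each two-element block lies in the $1$-eigenspace, i.e.\ has vanishing coefficient on the shorter representative $\bar C$; since $t\notin D_L(w)$ is exactly the condition that $w$ be the shorter representative of a two-element orbit, this says $c_w=0$ whenever $t\notin D_L(w)$, equivalently $t\in D_L(w)$ for all $w\in\mathrm{supp}(v)$. For \eqref{eigenspaces.2}, the equation $e_tv=0$ forces every fixed-block coefficient to vanish, which is condition~\eqref{eigenspaces.2.1} that the cosets met by $\mathrm{supp}(v)$ form a union of two-element orbits, and forces the component in each two-element block to be a multiple of $z_{\bar C}-z_{\overline{tC}}$, which is condition~\eqref{eigenspaces.2.2} that $c_{w^{(s)}}=-c_{(tw)^{(s)}}$; conversely any $v$ satisfying \eqref{eigenspaces.2.1} and \eqref{eigenspaces.2.2} lies in the $0$-eigenspace block by block.

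The only genuinely delicate point, which I would isolate as the first step above, is verifying that $e_t$ acts on the chosen basis in the expected way, namely that for $w\in W^{(s)}$ with $t\notin D_L(w)$ one has $tw\in W^{(s)}$ and $tw=\overline{twW_{(s)}}$. This is exactly where Proposition~\ref{maxcosetmoves} together with the characterization of $W^{(s)}$ as the set of longest coset representatives is needed; once the map $z_{\bar C}\mapsto z_{\overline{tC}}$ is known to stay inside $P_{(s)}$, everything reduces to an elementary eigenvalue analysis of $2\times2$ blocks.
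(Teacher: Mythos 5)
Your proof is correct and takes essentially the same approach as the paper: both determine the action of $e_t$ on the basis $\{z_w: w\in W^{(s)}\}$ of $P_{(s)}$ using the folding rule \eqref{foldingrule} together with Proposition~\ref{maxcosetmoves}, and then read off the $1$- and $0$-eigenspaces of the idempotent $e_t$. The paper merely compresses your orbit-by-orbit $2\times 2$ block analysis by directly exhibiting the basis $\{z_w : w\in W^{(s)},\, t\in D_L(w)\}$ of the $1$-eigenspace $e_tP_{(s)}$ and the basis $\{z_w-z_{tw}\}$ of the $0$-eigenspace.
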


\begin{proof}
Claim \eqref{eigenspaces.1} is clear since the eigenspace of $1$ for $e_t$ is the subspace 
$\Bbbk e_t\mathcal H(W)e _{(s)}$, which has basis consisting of the $z_w$ such that 
$w\in W^{(s)}$ and $t\in D_L(w)$.

For claim \eqref{eigenspaces.2}, we know that the eigenspace of $0$ has basis  consisting of all 
differences $z_w-e_tz _w$ such that $z_w\neq e_tz _w$.  Equivalently, it has basis  consisting of all elements 
of the form $z_w-z_{tw}$ such that $\mathfrak{l}(tw)>\mathfrak{l}(w)$ with $w,tw\in W^{(s)}$. 
Claim \eqref{eigenspaces.2} is now immediate from Proposition~\ref{maxcosetmoves}.
\end{proof}

We can now prove Theorem~\ref{simplesocle}.

\begin{proof}[Proof of Theorem~\ref{simplesocle}]
Suppose that $v\in P_{(s)}$ generates the irreducible representation $\theta_J$. Then $v$ is 
fixed by each $e_t$, $t\in J$, and annihilated by each $e_t$, $t\notin J$.  Thus the set $X$ of cosets of 
the form $wW_{(s)}$ with $w\in \mathrm{supp}(v)$ is $W_{S\setminus J}$-invariant by 
Lemma~\ref{eigenspaces}\eqref{eigenspaces.2.1}.  
Suppose that $\mathcal O$ is a $W_{S\setminus J}$-orbit on $X$ and that $w\in W^{(s)}$ is maximal 
with respect to the Bruhat order such that $wW_{(s)}\in \mathcal O$.  Then by 
Lemma~\ref{eigenspaces}\eqref{eigenspaces.1} one has $J\subseteq D_{L}(w)$.  On the other hand, since 
$twW_{(s)}\in \mathcal O$ for all $t\in S\setminus J$, by the maximality of $w$ 
we have $S\setminus J\subseteq D_{L}(w)$ and so $w=w_{0}$.  
In particular, it follows that $\mathcal O$ is always the orbit of $w_0$ and hence is unique.  
If $J=S$, it then follows that
$v\in \Bbbk e_{0}$ and we are done.  Otherwise, we must have $J=(w_{0}sw_{0})$ because if 
$t\in S\setminus J$ then, according to Lemma~\ref{eigenspaces}\eqref{eigenspaces.2.1}, $w_{0}W_{J}$ 
is not fixed by 
$t$ and so $t=w_{0}sw_{0}$ as a consequence of Propositions~\ref{maxcosetmoves} and 
\ref{maxcoset}.  An application of Lemma~\ref{eigenspaces}\eqref{eigenspaces.2.2} 
yields $v\in \Bbbk(z_{w_{0}s}-e_{0})$, as required.
\end{proof}

We are now in a position to prove the main result of this subsection. Recall that $\mathbf{r}(V)=|S|$ and $\mathbf{v}(W)$ is the number of vertices of the Coxeter complex of $W$, i.e., the sum of the indices of the maximal parabolics.

\begin{theorem}\label{thmnew1}
Let $W$ be a finite Coxeter group with set $S$ of Coxeter generators and let $M$ be an effective 
$\mathcal H(W)$-module over a field $\Bbbk$.  Then $M$ contains a submodule isomorphic 
to the projective module  
\begin{displaymath}
P=\bigoplus_{s\in S} \Bbbk\mathcal H(W)(e_{(s)}-e_{0}).
\end{displaymath} 
Consequently, the minimal dimension of an effective linear representation of 
$\mathcal H(W)$ is $\mathbf{v}(W)-\mathbf{r}(V)$.
\end{theorem}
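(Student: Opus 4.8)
The plan is to realize $P$ inside $M$ one indecomposable summand at a time, using Bergman's lemma, and then to glue the resulting embeddings together by exploiting that the summands $P'_{(s)}$ have pairwise non-isomorphic socles. First I would set $X=\{z_w:w\in W\}$, which under the canonical bijection $w\mapsto z_w$ is exactly the underlying set of $\mathcal H(W)$. Since $M$ is effective, the representation it affords is injective on $\mathcal H(W)$, and hence its restriction to $X$ is injective. By Theorem~\ref{simplesocle} the module $P'_{(s)}=\Bbbk\mathcal H(W)(e_{(s)}-e_{0})$ is a left ideal of $\Bbbk\mathcal H(W)$ with simple socle $\Bbbk(z_{w_0s}-e_0)$. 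Writing $e_0=z_{w_0}$, this socle generator equals $z_{w_0s}-z_{w_0}$, a non-zero difference of two elements of $X$. Lemma~\ref{bergmanlemma}, applied with $L=P'_{(s)}$, then produces an embedding $\phi_s\colon P'_{(s)}\hookrightarrow M$ for each $s\in S$.

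The main point is to upgrade these individual embeddings into an embedding of the whole direct sum. I would assemble them into a single homomorphism $\Phi=\bigoplus_{s\in S}\phi_s\colon P\to M$ and argue that $\ker\Phi=0$. If $\ker\Phi$ were non-zero it would contain a simple submodule $T$, which must lie in $\mathrm{soc}(P)=\bigoplus_{s\in S}\mathrm{soc}(P'_{(s)})=\bigoplus_{s\in S}\theta_{(w_0sw_0)}$. The crucial observation is that conjugation by $w_0$ permutes $S$, so the subsets $(w_0sw_0)$ are pairwise distinct, whence the one-dimensional simple modules $\theta_{(w_0sw_0)}$ are pairwise non-isomorphic. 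Thus $\mathrm{soc}(P)$ is a multiplicity-free semisimple module, and its only simple submodules are the individual summands $\mathrm{soc}(P'_{(s_0)})$. Consequently $T=\mathrm{soc}(P'_{(s_0)})$ for some $s_0$, which forces $\phi_{s_0}$ to annihilate a non-zero socle vector, contradicting the injectivity of $\phi_{s_0}$. Therefore $\Phi$ is injective and $P$ embeds in $M$.

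Finally, summing $\dim P'_{(s)}=[W:W_{(s)}]-1$ over $s\in S$ gives $\dim P=\mathbf v(W)-\mathbf r(V)$. Since an effective module of this exact dimension, namely $P$ itself, has already been exhibited before the theorem (its effectiveness following from Corollary~\ref{effectiveonsets} after discarding the trivial summands, none of which carry any distinguishing information), the embedding just established shows that no effective module can be smaller. Hence the minimal dimension of an effective $\mathcal H(W)$-module is exactly $\mathbf v(W)-\mathbf r(V)$.

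I expect the gluing step to be the only genuinely delicate part: knowing that each $P'_{(s)}$ embeds does not by itself give an embedding of the direct sum, and it is precisely the distinctness of the socles $\theta_{(w_0sw_0)}$ (via $w_0Sw_0=S$) that rules out cancellation among the separate embeddings. Everything else is a direct application of the machinery already assembled in Lemma~\ref{bergmanlemma}, Theorem~\ref{simplesocle}, and Corollary~\ref{effectiveonsets}.
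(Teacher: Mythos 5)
Your proposal is correct and follows essentially the same route as the paper: Bergman's lemma (Lemma~\ref{bergmanlemma}) combined with Theorem~\ref{simplesocle} embeds each $P'_{(s)}$ into $M$, the pairwise non-isomorphic socles $\theta_{(w_0sw_0)}$ (via $w_0Sw_0=S$) force the sum to be direct, and effectiveness of $P$ comes from Corollary~\ref{effectiveonsets}. Your kernel argument for $\Phi=\bigoplus_{s\in S}\phi_s$, using that $\mathrm{soc}(P)$ is multiplicity-free, is just a more explicitly worked-out version of the paper's one-line observation that the images $P'_{(s)}$ and $P'_{(t)}$ intersect trivially for $s\neq t$ (and in fact handles all summands at once, where the paper's phrasing only mentions pairwise intersections).
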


\begin{proof}
By Lemma~\ref{bergmanlemma} and Theorem~\ref{simplesocle} each module $P'_{(s)}$ is isomorphic 
to a submodule of $M$.  Moreover, since the simple socles of the $P'_{(s)}$, $s\in S$, are 
pairwise non-isomorphic, it follows that the intersection of $P'_{(s)}$ and $P'_{(t)}$ is trivial, for $s\neq t$, and so $P$ is a submodule of $M$.

It thus remains to show that $P$ is effective.  But this follows from Corollary~\ref{effectiveonsets}.
\end{proof}

As a consequence of Corollary~\ref{effectiveonsets} and Theorem~\ref{thmnew1}, we may deduce that the minimum degree of an effective action of $\mathcal H(W)$ on a set is $\mathbf{v}(W)-\mathbf{r}(V)+1$.

Specializing to the case $W=S_n$ we obtain the following result.

\begin{corollary}\label{cornew2}
The minimal degree of an effective linear representation of $\mathcal H_{n}$ is $2^{n}-n-1$.
\end{corollary}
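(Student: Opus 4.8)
The plan is to deduce Corollary~\ref{cornew2} directly from Theorem~\ref{thmnew1} by specializing $W=S_n$, so the entire task reduces to computing the two combinatorial quantities $\mathbf{v}(S_n)$ and $\mathbf{r}(S_n)$ that appear in the general formula $\mathbf{v}(W)-\mathbf{r}(V)$. Since $\mathcal H_n\cong \mathcal H(S_n,\{s_1,\dots,s_{n-1}\})$, the set $S$ of Coxeter generators has $n-1$ elements, so immediately $\mathbf{r}(S_n)=n-1$. It therefore remains only to identify $\mathbf{v}(S_n)$, the number of vertices of the Coxeter complex $\Sigma(S_n)$.

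First I would recall from Subsection~\ref{s2.3} that the vertex set of $\Sigma(S_n)$ consists precisely of the non-empty proper subsets of $\mathbf N$, of which there are $2^n-2$. This already gives $\mathbf{v}(S_n)=2^n-2$ without any further work, since $\mathbf{v}(W)$ was defined to be exactly this vertex count. Alternatively, and as a cross-check, one can compute $\mathbf{v}(S_n)=\sum_{s\in S}[S_n:W_{(s)}]$ directly from the definition: for each $i\in\mathbf N'$ the maximal parabolic $W_{(s_i)}$ is generated by all simple reflections except $s_i$, hence is the Young subgroup $S_i\times S_{n-i}$, giving index $\binom{n}{i}$. Summing over $i=1,\dots,n-1$ yields $\sum_{i=1}^{n-1}\binom{n}{i}=2^n-2$ by the binomial theorem (the full sum $2^n$ minus the two boundary terms $\binom{n}{0}=\binom{n}{n}=1$). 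The two computations agree, confirming $\mathbf{v}(S_n)=2^n-2$.

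Putting these together, Theorem~\ref{thmnew1} gives the minimal dimension of an effective linear representation of $\mathcal H_n$ as
\begin{displaymath}
\mathbf{v}(S_n)-\mathbf{r}(S_n)=(2^n-2)-(n-1)=2^n-n-1,
\end{displaymath}
which is exactly the claimed value. There is essentially no obstacle here: the corollary is a pure specialization, and the only content is the index computation, which is routine. The point worth emphasizing in the write-up is the coincidence of the two descriptions of $\mathbf{v}(S_n)$ — the geometric one via vertices of $\Sigma(S_n)$ (non-empty proper subsets) and the algebraic one via indices of maximal parabolics — since the excerpt itself already notes in the paragraph preceding Theorem~\ref{thmnew1} that the corresponding representation has dimension $2^n-n-1$, making this corollary a direct record of that observation.
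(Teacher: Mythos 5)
Your proof is correct and matches the paper's own argument: the paper likewise specializes Theorem~\ref{thmnew1} to $W=S_n$, computes $[S_n:(S_n)_{(s_i)}]=\binom{n}{i}$ so that $\mathbf{v}(S_n)=\sum_{i=1}^{n-1}\binom{n}{i}=2^n-2$, and subtracts the rank $n-1$. Your cross-check via the $2^n-2$ vertices of $\Sigma(S_n)$ is a nice consistency observation already implicit in Subsection~\ref{s2.3}, but it adds nothing beyond the paper's route.
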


\begin{proof}
For $i\in\mathbf{N}$ and $s=s_i$ we have $[S_n:(S_n)_{(s_i)}]=\binom{n}{i}$. The rank of $S_{n}$ is $n-1$.
\end{proof}

\subsection{Double Catalan monoid}\label{s7.2}

Theorem~\ref{simplesocle} also permits us to compute the minimal degree of an effective representation of the double Catalan monoid $\mathcal {DC}_{n}$ over a field $\Bbbk$.  

Proposition~\ref{prop6} implies that the image of an element $z_{w}$ of the $0$-Hecke monoid $\mathcal H_{n}$ in $\mathcal {DC}_{n}$ is determined by its images in $\mathbf C^{+}_{n}$ and $\mathbf C^{-}_{n}$.  These are in turn the effective quotients of $\mathcal H_{n}$ coming from its respective actions on the left ideals $\mathcal H_{n}e_{(s_{1})}$ and $\mathcal H_{n}e_{(s_{n-1})}$.  Thus one can identify $\mathcal {DC}_{n}$ with the effective quotient of the action of $\mathcal H_{n}$ on the left ideal $\mathcal H_{n}e_{(s_{1})}\cup \mathcal H_{n}e_{(s_{n-1})}$.   Notice that this left ideal acts effectively on itself and so projects injectively into $\mathcal {DC}_{n}$.  The action of $\mathcal {DC}_{n}$ on this left ideal can be understood easily in terms of its representation by boolean matrices.  Namely, let $v_{j}$ be the characteristic vector of the subset $\{1,\ldots, j\}$ and let $v'_{j}$ be the characteristic vector of $\{j,\ldots, n\}$.  Then the elements $\{v_1,v_2,\dots,v_n\}$ form an invariant subset on which $\mathcal {DC}_{n}$ acts as $\mathbf C^{+}_{n}$ and the elements $\{v'_1,v'_2,\dots,v'_n\}$ form an invariant subset on which $\mathcal {DC}_{n}$ acts as $\mathbf C_{n}^{-}$.  The two subsets intersect in the vector $v_{n}=v'_{1}$.

In summary, retaining the notation of the previous subsection, we can view $P=P'_{(s_{1})}\oplus P'_{(s_{n-1})}$ as an effective $\mathcal {DC}_{n}$-module over $\Bbbk$ of dimension $2n-2$.  We claim $P$ is the unique minimal effective $\mathcal {DC}_{n}$-module. 

\begin{theorem}\label{degreedoublecat}
Let $\Bbbk$ be a field.  Then the minimal dimension of an effective linear representation of $\mathcal {DC}_{n}$ over $\Bbbk$ is $2n-2$.  The unique minimal effective $\mathcal {DC}_{n}$-module is $P$ (defined above).
\end{theorem}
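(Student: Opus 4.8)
The plan is to establish the two halves of the statement separately: that $P$ is effective of dimension $2n-2$, which was already recorded in the discussion preceding the theorem, and that $P$ embeds into every effective $\mathcal{DC}_n$-module $M$. Granting both, $\dim M \ge \dim P = 2n-2$ for every effective $M$, with equality attained by $P$ itself; since moreover $P$ lies inside every effective module, it is the unique minimal one, and the theorem follows.

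To produce the embedding, I would first pull $M$ back along the surjection $\Psi\colon \mathcal H_n \to \mathcal{DC}_n$ of Proposition~\ref{prop5} and regard it as a $\Bbbk\mathcal H_n$-module. Set $X := \{z_{w_0},\, z_{w_0 s_1},\, z_{w_0 s_{n-1}}\}$, recalling that $e_0 = z_{w_0}$. Because $M$ is a faithful $\mathcal{DC}_n$-module, the associated representation of $\mathcal H_n$ restricted to $X$ is injective precisely when $\Psi$ separates these three elements, and this I would check with Proposition~\ref{prop6}: one has $\alpha_{w_0}\equiv n$, $\beta_{w_0}\equiv 1$, while $\alpha_{w_0 s_1}(1)=n-1$ and $\beta_{w_0 s_{n-1}}(n)=2$, so the three images $\Psi(z_{w_0})$, $\Psi(z_{w_0 s_1})$, $\Psi(z_{w_0 s_{n-1}})$ are pairwise distinct once $n\ge 3$.

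Next I would feed this into Bergman's lemma (Lemma~\ref{bergmanlemma}). By Theorem~\ref{simplesocle} the module $P'_{(s_1)}$ has simple socle spanned by $z_{w_0 s_1}-e_0$ and $P'_{(s_{n-1})}$ has simple socle spanned by $z_{w_0 s_{n-1}}-e_0$; each socle generator is a difference of two members of $X$. Lemma~\ref{bergmanlemma} therefore gives $\Bbbk\mathcal H_n$-linear embeddings $P'_{(s_1)}\hookrightarrow M$ and $P'_{(s_{n-1})}\hookrightarrow M$. Since the action of $\mathcal H_n$ on each left ideal $\mathcal H_n e_{(s_i)}$ factors through the corresponding Catalan quotient, which is itself a quotient of $\mathcal{DC}_n$, the modules $P'_{(s_1)}$ and $P'_{(s_{n-1})}$ are genuine $\mathcal{DC}_n$-modules and these embeddings are automatically $\mathcal{DC}_n$-linear. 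Finally, their socles are the non-isomorphic simple modules $\theta_{(w_0 s_1 w_0)}=\theta_{(s_{n-1})}$ and $\theta_{(w_0 s_{n-1} w_0)}=\theta_{(s_1)}$ (distinct because $s_1\ne s_{n-1}$ for $n\ge 3$), so the two images meet trivially and $P = P'_{(s_1)}\oplus P'_{(s_{n-1})}$ embeds in $M$.

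I expect the one genuinely delicate point to be the justification that the abstract $\Bbbk\mathcal H_n$-modules $P'_{(s_i)}$ descend to $\mathcal{DC}_n$, so that Bergman's lemma---which only produces $\mathcal H_n$-equivariant maps---yields an embedding of $\mathcal{DC}_n$-modules; this rests on the fact, recorded in Section~\ref{s6} and Subsection~\ref{s7.2}, that the two relevant left-ideal actions already factor through the Catalan quotients of $\mathcal{DC}_n$. The remaining steps---the separation of $X$ via Proposition~\ref{prop6} and the distinctness of the two socles---are routine.
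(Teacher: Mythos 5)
Your proposal is correct and takes essentially the same route as the paper: pull the effective $\mathcal{DC}_n$-module back to $\mathcal H_n$, apply Lemma~\ref{bergmanlemma} together with Theorem~\ref{simplesocle} to embed $P'_{(s_1)}$ and $P'_{(s_{n-1})}$, and use the non-isomorphic socles $\theta_{(s_{n-1})}$, $\theta_{(s_1)}$ as in the proof of Theorem~\ref{thmnew1} to conclude the sum is direct, with effectiveness and the dimension count $2n-2$ of $P$ taken from the discussion preceding the theorem. The only (harmless) difference is in verifying the injectivity hypothesis of Bergman's lemma: the paper observes that the representation is injective on the entire left ideal $\mathcal H_n e_{(s_1)}\cup\mathcal H_n e_{(s_{n-1})}$, since that ideal injects into $\mathcal{DC}_n$, whereas you check injectivity by direct computation (via Proposition~\ref{prop6}) only on the three elements $\{e_0,\,z_{w_0s_1},\,z_{w_0s_{n-1}}\}$ actually needed, and you make explicit the $\mathcal{DC}_n$-linearity point that the paper leaves implicit.
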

\begin{proof}
Any effective $\mathcal {DC}_{n}$-module $M$ yields a representation of $\mathcal H_{n} $ that is injective on the left ideal 
$\mathcal H_{n}e_{(s_{1})}\cup \mathcal H_{n}e_{(s_{n-1})}$.  Lemma~\ref{bergmanlemma} and Theorem~\ref{simplesocle} now imply that $P'_{(s_{1})}$ and $P'_{(s_{n-1})}$ are submodules of $M$.  The argument in the proof of Theorem~\ref{thmnew1} then shows that their direct sum $P$ is a submodule of $M$.  Thus $P$ is the unique minimal effective $\mathcal {DC}_{n}$-module.  As it has dimension $2n-2$, this completes the proof.
\end{proof}

\vspace{0.2cm}

\noindent
V.M.: Department of Mathematics, Uppsala University, Box 480,
SE-75106, Uppsala, SWEDEN, e-mail: {\tt mazor\symbol{64}math.uu.se},\\
web: ``http://www.math.uu.se/$\sim$mazor/''
\vspace{0.5cm}

\noindent
B.S.: School of Mathematics and Statistics; Carleton University,
1125 Colonel By Drive, Ottawa, Ontario K1S 5B6, CANADA,\\
e-mail: {\tt bsteinbg\symbol{64}math.carleton.ca}\\
web: ``http://www.math.carleton.ca/$\sim$bsteinbg/''
\vspace{0.1cm}

\noindent
Current address: Department of Mathematics, City College of New York, NAC 8/133,
Convent Ave at 138th Street, New York, NY 10031, USA\\
e-mail: {\tt  bsteinberg\symbol{64}ccny.cuny.edu}\\
web: ``http://www.sci.ccny.cuny.edu/$\sim$benjamin/''
\end{document}